\documentclass[a4paper, DIV=12, 11pt]{amsart}
\usepackage[utf8]{inputenc}
\usepackage{amssymb, amsthm, amsmath, thmtools, mathtools}
\usepackage{amsfonts}
\usepackage[abbrev]{amsrefs} 

\usepackage{graphicx}
\usepackage{hyperref}
\usepackage[bottom, marginal]{footmisc}
\usepackage{todonotes}

\declaretheoremstyle[headfont=\normalfont]{normalhead}
\newtheorem{lemma}{Lemma}[section]
\newtheorem{theorem}[lemma]{Theorem}
\newtheorem{proposition}[lemma]{Proposition}
\newtheorem{corollary}[lemma]{Corollary}
\newtheorem{definition}[lemma]{Definition}
\newtheorem{remark}[lemma]{Remark}

\newtheorem*{acknowledgement}{Acknowledgement}
\newtheorem*{AIuse}{AI Usage Disclosure}

\newcounter{mt}

\newtheorem{maintheorem}[mt]{Theorem}

\newcommand{\R}{\mathbb{R}}

\DeclareMathOperator{\vol}{vol}

\DeclareMathOperator{\supp}{supp}

\DeclareMathOperator{\GL}{GL}

\DeclareMathOperator{\Aff}{\mathrm{Aff}}

\DeclareMathOperator{\SO}{\mathrm{SO}}

\newcommand{\calP}{\mathcal{P}}

\renewcommand{\H}{\mathbb{H}}
\renewcommand{\S}{\mathbb{S}}
\DeclareMathOperator{\CV}{CV}

\DeclareMathOperator{\PGL}{PGL}
\DeclareMathOperator{\Ad}{Ad}

\DeclareMathOperator{\loc}{loc}
\DeclareMathOperator{\Id}{Id}

\numberwithin{equation}{section}

\author{Jonas Knoerr}
\title{Hadwiger's Theorem on hyperbolic space}
\date{}

\newcommand{\Addresses}{{
		\bigskip
		\footnotesize
		
		Jonas Knoerr, \textsc{Institute of Discrete Mathematics and Geometry, TU Wien, Wiedner Hauptstrasse 8-10, 1040 Wien, Austria}\par\nopagebreak
		\textit{E-mail address}: \texttt{jonas.knoerr@tuwien.ac.at}
		
		\medskip
	}}
	
\makeatletter
\def\blfootnote{\xdef\@thefnmark{}\@footnotetext}
\makeatother

\makeindex
\begin{document}
\maketitle
\begin{abstract}
	We present a proof classifying continuous isometry invariant valuations on polytopes in hyperbolic space $\mathbb{H}^n$ as linear combinations of the intrinsic volumes. The argument relies on an adaptation of the spherical case recently established by the author that circumvents an approximation step which does not generalize to the hyperbolic setting.
\end{abstract}
\blfootnote{2020 \emph{Mathematics Subject Classification}. 52B45, 52A55, 52B11.\\
	\emph{Key words and phrases}. hyperbolic space, polytopes, valuation, intrinsic volumes.\\}
\tableofcontents

\section{Introduction}
	
	The most famous classical result in geometric valuation theory is Hadwiger's characterization of the intrinsic volumes.
		\begin{theorem}[\cite{HadwigerVorlesungenuberInhalt1957}]
		\label{theorem:Hadwiger}
		Let $\mu:\mathcal{K}(\R^n)\rightarrow\R$ be a continuous, translation and $\SO(n)$-invariant valuation. Then $\mu$ is a linear combination of the intrinsic volumes $V_0,\dots,V_n$.
	\end{theorem}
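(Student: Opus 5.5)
The plan is an induction on the dimension $n$, following Klain's simple-valuation argument. For $n=1$ the statement is elementary. A continuous translation invariant valuation on intervals that is also invariant under $t\mapsto -t$ satisfies Cauchy's functional equation in the length. Continuity then forces it to be $aV_0+bV_1$.

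For the inductive step, let $\mu$ be as in Theorem~\ref{theorem:Hadwiger} on $\mathcal{K}(\R^n)$ and fix a hyperplane $H$. The restriction of $\mu$ to $\mathcal{K}(H)$ is a continuous valuation invariant under the translations and rotations of $H$. By induction it equals $\sum_{i=0}^{n-1}c_iV_i$, and since intrinsic volumes do not depend on the ambient space, the constants $c_i$ do not depend on $H$. Then $\mu':=\mu-\sum_{i=0}^{n-1}c_iV_i$ vanishes on all bodies of dimension at most $n-1$, so $\mu'$ is \emph{simple}. It remains to show:

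\textbf{Key claim (Klain).} A continuous, simple, translation and $\SO(n)$-invariant valuation $\mu$ on $\mathcal{K}(\R^n)$ is a multiple of $V_n$.

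The argument for the key claim runs as follows.
\begin{enumerate}
\item A simple translation invariant valuation is additive on cube dissections. It therefore equals $cV_n$ on unions of lattice cubes of mesh $1/k$. After subtracting $cV_n$ we may assume $\mu$ vanishes on all boxes, hence on all parallelotopes obtained from boxes by rigid motions.
\item Next, show that $\mu$ is even, that is, $\mu(-K)=\mu(K)$. This holds for free when $n$ is odd, using $-\Id\cdot$(rotation). For general $n$ one uses Klain's trick. Dissect a simplex $\Delta$ together with its reflection and cut prisms $\Delta\times[0,1]$ into pieces. Symmetric pieces combine to show that $\mu(\Delta)+\mu(-\Delta)$ relates to the values on prisms.
\item Restrict to cylinders $K\times[0,t]$ with $K\subset\R^{n-1}$. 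The map $K\mapsto\mu(K\times[0,1])$ is a continuous, simple, translation invariant, even valuation on $\R^{n-1}$. Using induction together with invariance under rotations inside $\R^{n-1}$, it vanishes. Here one needs the stronger inductive statement that even simple valuations vanish given vanishing on cubes, with rotation invariance available.
\item An even simple valuation that vanishes on all right cylinders vanishes on all polytopes. The reason is that every polytope, after reflection tricks, is equidecomposable modulo lower-dimensional sets with a union of cylinders. Finally, continuity extends the vanishing from polytopes to $\mathcal{K}(\R^n)$.
\end{enumerate}

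The main obstacle is step (2) together with the reduction of odd simple valuations. Handling the odd part requires Schneider's result that odd simple translation invariant continuous valuations have the form $\int h_K\,f\,dS$ with $f$ odd. For $n\ge2$, rotation invariance then forces $f$ to be constant, and an odd constant is zero. I would first try to avoid Schneider's theorem by using the rotation by $\pi$ in a coordinate $2$-plane, which maps $K$ to a body related to $-K$ by a reflection. I would combine this with Klain's prism dissection to conclude that $\mu(K)=\mu(-K)$ for simplices, and then argue as above.
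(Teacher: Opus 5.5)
Your reduction to simple valuations (induction on $n$, subtracting $\sum_{i<n}c_iV_i$) is correct. It is the same reduction the paper uses for $\H^n$ in the proof of \autoref{maintheorem:hyperbolicHadwiger}; the paper gives no proof of Theorem~\ref{theorem:Hadwiger} itself and only cites it. Your proof of the key claim, however, has two genuine gaps.

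\textbf{Step (2), evenness.} The parity is reversed. Since $\det(-\Id)=(-1)^n$, the identity $\mu(-K)=\mu(K)$ is free exactly when $n$ is \emph{even}, so the case that needs an argument is $n$ odd (e.g.\ $n=3$). What you offer for that case does not work:
\begin{itemize}
\item An identity relating $\mu(\Delta)+\mu(-\Delta)$ to values on prisms says nothing about $\mu(\Delta)-\mu(-\Delta)$. In fact translations, simplicity, continuity and prisms alone cannot yield evenness: for $f$ odd, $K\mapsto\int_{S^{n-1}}f\,dS_{n-1}(K,\cdot)$ is continuous, simple, translation invariant, vanishes on boxes, and is odd.
\item That expression, not $\int h_K f\,dS$, is the odd part in Schneider's theorem. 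With the correct form your fallback does work, because rotation invariance makes $f$ constant modulo linear functions. But then Schneider's theorem settles the key claim outright, and steps (1)--(4) are superfluous.
\item Rotating by $\pi$ in a coordinate plane does not help. For odd $n$ it only rewrites $-K$ as the image of a rotated copy of $K$ under an orientation-reversing isometry, which is the same problem.
\end{itemize}
$\SO(n)$-invariance has to be used through a dissection. Let $c$ be the incenter of a simplex $\Delta$ and $p_i$ the point where the insphere touches the facet $F_i$. Then $\Delta$ is dissected into the simplices $A_{ij}=\mathrm{conv}(\{c,p_i\}\cup(F_i\cap F_j))$, $i\neq j$. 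The reflection $\rho_{ij}$ in the hyperplane through $c$ and $F_i\cap F_j$ maps $A_{ji}$ onto $A_{ij}$. For any reflection $\rho$, the map $\rho\rho_{ij}$ is a proper rigid motion, so simplicity gives $\mu(\rho\Delta)=\sum_{i\neq j}\mu(\rho\rho_{ij}A_{ji})=\sum_{i\neq j}\mu(A_{ji})=\mu(\Delta)$. Hence $\mu$ is $O(n)$-invariant, and in particular even.

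\textbf{Step (4), cylinders to simplices.} The reason you give is false. For $n=3$ every prism has vanishing Dehn invariant $D$, while a regular tetrahedron $T$ does not. So no combination of dissections, translations, reflections and rigid motions can reduce $T$ (or $T$ together with $-T$) to unions of cylinders. Concretely, let $\theta$ be a $\mathbb{Q}$-linear functional with $\theta(D(T))\neq 0$. Then $\theta\circ D$ is a simple, isometry-invariant (hence even) valuation on polytopes. It satisfies the conclusions of your steps (1)--(3): it vanishes on boxes and on all cylinders. Yet it does not vanish on $T$.

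Continuity must therefore enter essentially at exactly this passage from cylinders to simplices. Your outline uses it only at the very end, to go from polytopes to $\mathcal{K}(\R^n)$. This passage is the substance of Klain's volume theorem \cite{KlainshortproofHadwigers1995}: a continuous, simple, translation invariant, even valuation is a multiple of $V_n$. Once evenness is established as above, you need either to invoke that theorem or to give an argument for this step that genuinely uses continuity.
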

	Here, $\mathcal{K}(\R^n)$ denotes the space of convex bodies in $\R^n$, i.e. the set of all nonempty compact and convex subsets of $\R^n$ equipped with the Hausdorff metric, and a functional $\mu:\mathcal{S}\rightarrow\R$ defined on some family of sets $\mathcal{S}$ is called a valuation if
	\begin{align*}
		\mu(K)+\mu(L)=\mu(K\cup L)+\mu(K\cap L)
	\end{align*}
	for all $K,L\in\mathcal{S}$ such that $K\cup L,K\cap L\in\mathcal{S}$. In the last 25 years, similar characterization results have been obtained for continuous translation invariant valuations on convex bodies that are invariant under other compact subgroups of the general linear group $\GL(n,\R)$, compare \cite{BernigHadwigertypetheorem2009,BernigInvariantvaluationsquaternionic2012,BernigIntegralgeometry$G_2$2011,BernigSolanesKinematicformulasquaternionic2017,AleskerHardLefschetztheorem2003,KotrbatyWannererIntegralgeometryoctonionic2025}. Most of these classification results rely on the foundational work by Alesker \cite{AleskerDescriptiontranslationinvariant2001}, which has led to the discovery of several new structures on the space of continuous and translation invariant valuations on convex bodies, with a variety of applications in convex, differential, and integral geometry \cite{AleskerFaifmanConvexvaluationsinvariant2014,AleskerDescriptioncontinuousisometry1999,BernigEtAlHardLefschetztheorem2024,FuStructureunitaryvaluation2006,BernigFuHermitianintegralgeometry2011,BernigHugKinematicformulastensor2018,WannererIntegralgeometryunitary2014,Wannerermoduleunitarilyinvariant2014}. \\
	It has been a long-standing conjecture whether a version of Hadwiger's characterization holds for other real space forms, i.e. for the Euclidean sphere $\S^n$ and hyperbolic space $\H^n$. The corresponding results are easy to show for $\S^1$ and $\H^1$, and the two-dimensional cases were considered by Klain \cite{KlainIsometryinvariantvaluations2006} for the hyperbolic plane and Klain--Rota \cite{KlainRotaIntroductiongeometricprobability1997}*{Theorem 11.3.1} for the two-dimensional sphere. Recently, the author established the conjectured spherical Hadwiger Theorem for continuous isometry invariant valuations on spherical polytopes \cite{KnoerrIsometryinvariantvaluations2026}, which is based on a generalization of Hadwiger's Theorem to measurable valuations on polytopes in $\R^n$ in \cite{KnoerrRigidmotioninvariant2026}. In particular, the classification holds for spherically convex bodies. In this article, we present the proof of the corresponding result for hyperbolic space.\\
	
	Let us introduce some notation. We consider $\R^{n,1}=\R^{n+1}$ with the standard Lorentzian metric $\langle x,y\rangle=- x_0y_0+\sum_{i=1}^n x_iy_i$ and the hyperbolic space $\H^n$ in the hyperboloid model, i.e.
	\begin{align*}
		\H^n=\{x\in \R^{n,1}:\langle x,x\rangle =-1, x_0>0\}
	\end{align*}
	with Riemannian metric given by the restriction of $\langle\cdot,\cdot\rangle$ to the tangent spaces. We let $\calP(\H^n)$ denote the space of all polytopes in $\H^n$ equipped with the hyperbolic Hausdorff metric, where a polytope is by definition the geodesic convex hull of a finite set of points. Let $\SO^+(1,n)$ denote the identity component of the isometry group of $\H^n$. The main result of this article is the following extension of Hadwiger's Theorem to hyperbolic space.	
	\begin{maintheorem}\label{maintheorem:hyperbolicHadwiger}
		Let $\mu:\calP(\H^n)\rightarrow\R$ be a continuous and $\SO^+(1,n)$-invariant valuation. Then $\mu$ is a linear combination of the hyperbolic intrinsic volumes $V_0^h, \dots,V_n^h$.
	\end{maintheorem}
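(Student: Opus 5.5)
We argue by induction on the dimension $n$, subtracting suitable multiples of the intrinsic volumes until the valuation becomes \emph{simple}, i.e. vanishes on lower-dimensional polytopes. The cases $n=1$ (and $n=2$ by Klain) serve as the base. For the inductive step, take a continuous $\SO^+(1,n)$-invariant valuation $\mu$ on $\calP(\H^n)$. Its restriction to polytopes contained in a totally geodesic hyperplane $\H^{n-1}\subset\H^n$ is a continuous valuation on $\calP(\H^{n-1})$. It is invariant under the stabilizer of the hyperplane, which acts on it as the full isometry group of $\H^{n-1}$, including reflections. By induction this restriction is a linear combination of $V_0^h,\dots,V_{n-1}^h$. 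Since the hyperbolic intrinsic volumes are intrinsic, i.e. $V_i^h$ of a polytope in a hyperplane coincides with the $(n-1)$-dimensional one, subtracting $\sum_{i<n} c_iV_i^h$ leaves a continuous invariant valuation $\mu'$ vanishing on all polytopes of dimension less than $n$. We must then show $\mu'=c\,V_n^h$.

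For simple valuations I will adapt the strategy of the spherical case \cite{KnoerrIsometryinvariantvaluations2026}. The first step uses simplicity and the inclusion--exclusion principle for dissections: $\mu'$ extends to a finitely additive function on finite unions of polytopes with disjoint interiors. Let $\mu''=\mu'-\frac{\mu'(\Delta_0)}{V_n^h(\Delta_0)}V_n^h$ for a fixed small simplex $\Delta_0$. The key claim is that a continuous, invariant, simple valuation $\mu''$ on $\calP(\H^n)$ vanishes identically. To prove it, I pass to small scales: in an exponential chart at a point $p$, polytopes in a small ball correspond to Euclidean-like polytopes. The idea is to produce from $\mu''$ a function on Euclidean polytopes that is translation invariant (in the infinitesimal limit), $\SO(n)$-invariant, simple and measurable. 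Concretely, I would use the projective (Beltrami--Klein) model, in which hyperbolic polytopes are exactly Euclidean polytopes inside the ball. The $\SO(n)$ stabilizer of the center acts linearly, and blowing up near the center approximates translations.

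The main obstacle is precisely the approximation step. In the spherical case, scaling limits or continuity with respect to a dilation family can be used, but hyperbolic space has no homotheties. I would try to avoid limits altogether: use the Klein model to view $\mu''$ restricted to polytopes in a fixed ball as a valuation on Euclidean polytopes. This valuation is $\SO(n)$-invariant and simple but not translation invariant. I would then show that the associated density $\phi(x)=\lim \mu''(P)/\vol(P)$, or rather the Klain-type function on simplices, is determined by the boundary angle data. This would be done via Dehn-type dissections: any simplex is equidecomposable up to simple pieces with combinations of orthoschemes, and invariance plus simplicity reduce $\mu''$ to a function of orthoscheme angles. Continuity then forces this function to be additive in a way only compatible with volume. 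Alternatively, I would invoke the measurable Euclidean Hadwiger theorem \cite{KnoerrRigidmotioninvariant2026} applied to a derived valuation, defined via the first variation of $\mu''$ along translations in a horosphere. Horospherical translations are parabolic isometries acting by Euclidean translations on horospheres, which restores translation invariance without approximation. Showing that this derived object is a measurable, simple, rigid-motion invariant valuation on Euclidean polytopes, and therefore a multiple of volume, is where I expect most of the work. Having that, the vanishing normalization on $\Delta_0$ gives $\mu''=0$, which completes the induction.
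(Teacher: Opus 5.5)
Your reduction to the simple case is correct and is the paper's argument: induction on $n$, restriction to a totally geodesic copy of $\H^{n-1}$, subtraction of $\sum_{i<n}c_iV_i^h$, and transitivity of $\SO^+(1,n)$ on hyperplanes to see that the remainder is simple. The gap is everything after that point. That a continuous, $\SO^+(1,n)$-invariant, simple valuation is a multiple of $V_n^h$ (Theorem~\ref{maintheorem:SimpleValuation}) is the actual content of the result, and your proposal only lists candidate routes, none of which works as described.
\begin{itemize}
\item The density $\phi(x)=\lim\mu''(P)/\vol(P)$ presupposes that $\mu''$ is absolutely continuous with respect to volume, which is exactly what must be shown. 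Simplicity and continuity alone do not give it: in $\R^n$, the simple continuous valuations $P\mapsto\sum_F f(u_F)\vol_{n-1}(F)$ over facets, with $f$ odd, make this ratio blow up.
\item In $\R^n$ the orthoscheme/Dehn route is closed by Cauchy-type functional equations coming from translations and dilations, for instance on boxes. Neither tool exists in $\H^n$. Your sentence ``continuity then forces this function to be additive in a way only compatible with volume'' restates the long-open problem rather than proving it.
\item The horospherical variant fails outright. Parabolic isometries lie in $\SO^+(1,n)$, so $\mu''$ is invariant under them and its first variation along them vanishes identically. They also do not act by translations in the Klein model, which is where hyperbolic polytopes are Euclidean polytopes. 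In any case, a merely continuous valuation cannot be differentiated without first being regularized.
\end{itemize}

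The real obstacle is not the absence of homotheties; the sphere has none either. It is that $\mu$ must be regularized before it can be differentiated, and in $\H^n$ the regularized valuations cannot be averaged over the non-compact group $\SO^+(1,n)$ to restore invariance. The paper handles this as follows.

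It first mollifies locally over the open set $U_0\subset\PGL(n+1,\R)$, which acts projectively. In the Klein chart at $e_0$ the valuations $G_{e_0*}\mu_\psi$ are then affine smooth. Their derivatives $\Lambda^\mu_k(\psi)=D^k_0(G_{e_0*}\mu_\psi)$ are translation invariant, $k$-homogeneous, simple, measurable valuations on $e_0^\perp$.

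Instead of making $\mu_\psi$ invariant, the paper tracks how $\Lambda^\mu_k(\psi)$ depends on $\psi$:
\begin{itemize}
\item it is unchanged under right translation of $\psi$ by $\SO^+(1,n)$;
\item under $a_L\in\mathcal{A}$ it transforms by $P\mapsto LP$;
\item once the lower derivatives vanish, it is invariant under $\mathcal{N}$, because in the chart $n_x$ maps a simplex with a vertex at the origin onto a linear image of that simplex.
\end{itemize}

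Hence for $\Ad(K)$-invariant $\psi$ it is $\SO(n)$-invariant, and so zero for $k<n$ by the measurable Hadwiger theorem for simple valuations. An arbitrary $\psi$ is approximated by superpositions of $H$-translates of $\Ad(K)$-invariant mollifiers, with $H=\mathcal{S}\times\SO^+(1,n)$. Together with continuity of $\psi\mapsto\Lambda^\mu_k(\psi)[P]$, this gives $\Lambda^\mu_k\equiv 0$ for all $k<n$.

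Finally, $\Lambda^\mu_n(\psi)=c_\mu(\psi)\vol_n$, where $c_\mu$ is a distribution on the homogeneous space $U_0\cong H/K_\Delta$ that transforms by a character. It is therefore unique up to a constant. This forces $\mu$ to be a smooth measure, hence a multiple of $V_n^h$.

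Your mention of the Klein model, the $\SO(n)$-stabilizer of the centre and the measurable Euclidean Hadwiger theorem points in the right direction. Without the local regularization and the equivariance-in-$\psi$ argument, however, there is no object to which that theorem can be applied.
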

	Note in particular that this result implies the corresponding classification of continuous and $\SO^+(1,n)$-invariant valuations on geodesically convex compact sets in $\H^n$, since any such set can be approximated in the hyperbolic Hausdorff metric by polytopes.\\
	As in all proofs of Hadwiger's Theorem (see \cite{HadwigerVorlesungenuberInhalt1957,Chensimplifiedelementaryproof2004,KlainshortproofHadwigers1995,KnoerrRigidmotioninvariant2026}) and the spherical Hadwiger Theorem in \cite{KnoerrIsometryinvariantvaluations2026}, the argument can be reduced to a characterization of simple valuations, where we call a valuation on $\calP(\H^n)$ simple if it vanishes on lower dimensional polytopes.
	\begin{maintheorem}
		\label{maintheorem:SimpleValuation}
		Let $\mu:\calP(\H^n)\rightarrow\R$ be a continuous and $\SO^+(1,n)$-invariant valuation that is in addition simple. Then $\mu$ is a multiple of the hyperbolic volume.
	\end{maintheorem}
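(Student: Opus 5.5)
Here is a plan for Theorem \ref{maintheorem:SimpleValuation}. Work with the projective (Beltrami--Klein) model and transfer the problem to a Euclidean problem about measurable valuations, mirroring the spherical case \cite{KnoerrIsometryinvariantvaluations2026}. I argue by induction on $n$; the case $n=1$ is elementary.

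\textbf{Step 1 (pulling back to the Klein model).} Radial projection from the origin identifies $\H^n$ with the open unit ball $B\subset\R^n$, via $x\mapsto (x_1,\dots,x_n)/x_0$. Geodesic polytopes correspond exactly to Euclidean polytopes contained in $B$, and $\SO^+(1,n)$ acts by projective transformations preserving $B$. Thus $\mu$ becomes a simple valuation $\tilde\mu$ on polytopes in $B$. It is continuous for the Euclidean Hausdorff metric on compact subsets of $B$, and invariant under the projective automorphisms of $B$.

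\textbf{Step 2 (a measure).} A simple valuation on polytopes is determined by its behaviour under dissections. I would show that $\tilde\mu$ extends to a finitely additive set function on the ring generated by polytopes in $B$. Continuity then makes it $\sigma$-additive on polytopes in a fixed compact subset, so it defines a locally finite signed Borel measure $\nu$ on $B$. The points needing care are:
\begin{itemize}
\item inner regularity, obtained by approximating a polytope from inside by shrunken copies and using that $\mu$ of thin slabs tends to $0$ by continuity and simplicity;
\item local finiteness, obtained from continuity on compact families.
\end{itemize}
This is where the measurable-valuation viewpoint of \cite{KnoerrRigidmotioninvariant2026} enters. It replaces the approximation step used in the spherical argument, which the abstract says does not carry over.

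\textbf{Step 3 (invariance forces a volume multiple).} The measure $\nu$ is invariant under $\SO^+(1,n)$. This group acts transitively on $\H^n$ with compact stabilizers $\SO(n)$, so its invariant Radon measures are unique up to scalars. Hence $\nu=c\,\vol^h$, and therefore $\mu(P)=c\vol^h(P)$ for every $P$.

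\textbf{Main obstacle.} The hard part is Step 2: proving countable additivity, meaning that $\mu(P_k)\to 0$ whenever the $P_k$ decrease to a null set, from continuity alone. A simple continuous valuation could a priori behave like a Dehn-invariant-type functional that is additive but not monotone. I would try to rule this out as follows:
\begin{itemize}
\item use invariance to cut polytopes into small simplices, each of which is nearly Euclidean in a chart;
\item show that $\tilde\mu$ restricted to tiny simplices near a point is, after rescaling, approximately a translation-invariant, $\SO(n)$-invariant simple continuous valuation on Euclidean polytopes;
\item by the Euclidean simple Hadwiger theorem (Klain's argument, \cite{KlainshortproofHadwigers1995}), such a valuation is $c\vol+$ an odd part, and the odd part is killed by $\SO(n)$-invariance together with continuity;
\item control the error uniformly, which yields the bound $|\mu(P)|\le C\vol^h(P)$ for small $P$;
\item sum this bound over a fine dissection to get $|\mu(P)|\le C\vol^h(P)$ for all $P$, which gives absolute continuity and completes Step 2.
\end{itemize}
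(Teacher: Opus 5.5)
There is a genuine gap, and it sits exactly at the step you label the main obstacle. The surrounding steps are fine: additivity of simple valuations under dissections, extension to a measure once $|\mu(P)|\le C\vol^h(P)$ is known, and uniqueness of $\SO^+(1,n)$-invariant Radon measures. But they carry no weight without that bound, and the blow-up argument cannot produce it.

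Two side remarks on Step 2 as first stated. Continuity plus simplicity does not give $\sigma$-additivity: the continuous simple odd valuations $P\mapsto\sum_F f(u_F)\vol_{n-1}(F)$ on Euclidean polytopes are counterexamples. Also, local finiteness does not follow from ``continuity on compact families'', since polytopes in a compact set do not form a compact family.

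On the blow-up itself:
\begin{enumerate}
\item Continuity only tells you that $\mu(P)\to0$ as $P$ shrinks to a point, not at what rate. $\SO^+(1,n)$ contains no dilations, so nothing relates different scales. In Klain's Euclidean argument this work is done by exact translation invariance: a prism $P\times[0,s+t]$ splits into translates of $P\times[0,s]$ and $P\times[0,t]$, and Cauchy's functional equation follows. Hyperbolic space has no such exact self-similar decompositions. So you have no way to show that the rescaled family $t^{-n}\mu(\cdot)$ on tiny copies of a polytope converges, or even stays bounded.
\item ``Controlling the error uniformly'' needs a scale-invariant modulus of continuity. For $Q'\subset Q$ at scale $t$ with $Q\setminus Q'$ a thin shell, you would need $|\mu(Q)-\mu(Q')|=o(t^n)$. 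Since $\mu(Q)-\mu(Q')$ is $\mu$ summed over a dissection of $\overline{Q\setminus Q'}$, this is precisely the volume bound you are trying to prove, so the argument is circular.
\item Even where a pointwise blow-up limit exists, it is in general only measurable, so the continuous Klain/Schneider classification does not apply. You would need the measurable version (\autoref{proposition:FlatCasesimpleValuationsVanish}), which you mention but never use.
\end{enumerate}
Finally, the induction on $n$ you announce plays no role.

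The missing idea is a regularization compatible with the invariance. The paper mollifies $\mu$ not over isometries but over the partial action of $\PGL(n+1,\R)$ on a neighbourhood $U_0$ of the identity. This set contains the Klein-chart dilations $g_t$ and chart translations $\tau_x$. The resulting local valuations $\mu_\psi$ are affine smooth, so $t\mapsto\mu_\psi(g_tG_{e_0}^{-1}(P))$ has a Taylor expansion. Its coefficients $\Lambda^\mu_k(\psi)$ are genuine translation-invariant, $k$-homogeneous, measurable valuations.

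The $\SO^+(1,n)$-invariance of $\mu$ is converted into the equivariance $\Lambda^\mu_k(\pi(\sigma,h)\psi)[P]=\Lambda^\mu_k(\psi)[L^{-1}P]$ in the mollifier; the $\mathcal{N}$-part uses \autoref{proposition:InvarianceDerivativesFractionalLinearPart}. Combining the measurable Hadwiger theorem for $\Ad(K)$-invariant $\psi$ with \autoref{lemma:approximationMollifier} kills $\Lambda^\mu_k$ for $k<n$ (\autoref{theorem:SimpleLocallyW_n}). This is exactly the $O(t^n)$ rate your argument presupposes, but obtained for $\mu_\psi$ rather than for $\mu$.

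The top coefficient is $c_\mu(\psi)\vol_n$, with $c_\mu$ an equivariant distribution on $U_0\cong H/K_\Delta$. Hence $c_\mu(\psi)=C\int_{U_0}\psi\, w\, d\lambda_{\PGL(n+1,\R)}$ (\autoref{theorem:InvariantWnlocCharacterization}). This explicit formula lets one pass to the limit $\psi_j\to\delta_{\Id}$ and conclude $\mu(G_{e_0}^{-1}(P))=C\int_P w(\tau_{-x})\,dx$. So $\mu$ is a smooth measure, without the volume bound for $\mu$ ever being proved directly.
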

	The proof we will discuss in this article is an extension of the approach used in \cite{KnoerrIsometryinvariantvaluations2026} for the spherical case. The main idea is to exploit a differentiability property satisfied by valuations on polytopes in $\R^n$ that are affine smooth (see \autoref{section:affineSmooth} for the precise definition), i.e. that satisfy a certain smoothness property with respect to the natural action of the affine group on these valuations. This property can be used to transfer several constructions by Alesker \cite{AleskerTheoryvaluationsmanifolds.2006} for so-called quasi-smooth valuations to valuations on polytopes. The main idea behind these constructions is that they allow for the interpretation of suitable derivatives of the valuations as \emph{translation invariant} valuations on the tangent space at a given point. These derivatives are not necessarily continuous valuations, however, they are always measurable. Since the results in \cite{KnoerrRigidmotioninvariant2026} extend \autoref{theorem:Hadwiger} to measurable valuations on polytopes, this approach provides a way to obtain local information about the valuations under suitable invariance and regularity assumptions. In particular, it allows for a characterization of smooth measures in terms of the vanishing properties of these derivatives.\\
	For the spherical case, this is sufficient to obtain the desired characterization result. More precisely, the action of $\GL(n+1,\R)$ on lines in $\R^{n+1}$ gives rise to a natural action of $\GL(n+1,\R)$ on continuous valuations on spherical polytopes. Since the isometry group of $\S^n$ is compact, a simple mollification procedure shows that any such valuation can be approximated by isometry invariant valuations that restrict to affine smooth valuations in coordinate charts corresponding to the gnomonic projection. Since these can be investigated through their derivatives, this provides the necessary simplification to obtain the general classification.\\
	Most of these arguments generalize directly to hyperbolic space: There is a natural partial action of the projective linear group $\PGL(n+1,\R)$ on $\H^n$, which gives rise to a partial action on $\calP(\H^n)$, and thus to a notion of smoothness for valuations on $\calP(\H^n)$. Under the gnomonic projection, i.e. in the Beltrami--Klein model, this action corresponds to a partial action of the affine group on polytopes in the unit ball. Since the constructions from \cite{KnoerrIsometryinvariantvaluations2026} are local in nature, this is in principle sufficient to obtain \autoref{maintheorem:hyperbolicHadwiger} for valuations that satisfy additional smoothness assumptions with respect to the partial action of $\PGL(n+1,\R)$. However, in contrast to the spherical case, there is no direct way to obtain the general result using a direct approximation argument: Since we only have a partial action of $\PGL(n+1,\R)$ on $\calP(\H^n)$, the standard mollification procedure with suitable compactly supported smooth functions on a neighborhood of the identity in $\PGL(n+1,\R)$ only produces valuations on subsets of $\calP(\H^n)$. The more restrictive problem, however, is the non-compactness of the group $\SO^+(1,n)$, which prevents any simple approximation argument. In other words, there is no direct way to establish a priori that any continuous $\SO^+(1,n)$-invariant valuation can be approximated by invariant valuations with the required regularity properties. This is not a problem in the spherical setting, since we can average any approximating sequence with respect to the Haar measure on the corresponding compact isometry group.\\
	
	The solution to this problem comes from a more careful examination of the properties of the locally defined valuations obtained from the mollification. More precisely, after fixing the point $e_0\in \H^n\subset\R^{n,1}$, the elements in $U_0\subset \PGL(n+1,\R)$ that map $e_0$ to a point in $\H^n$ may be decomposed into elements of the stabilizer $\mathcal{S}$ of $e_0$ in $\PGL(n+1,\R)$ and elements in $\SO^+(1,n)$. Since the locally defined mollified valuations are obtained by convolution with respect to the Haar measure on $\PGL(n+1,\R)$, they naturally inherit various compatibility properties with respect to the action of $\mathcal{S}$ and $\SO^+(1,n)$. In particular, these properties descend to the derivatives of the corresponding affine smooth valuations in the Beltrami--Klein model at the origin. This allows us to examine the relevant derivatives of all possible local mollifications in the same framework, which we use to show that the necessary vanishing properties hold for simple $\SO^+(1,n)$-invariant valuations independent of the chosen mollifier, compare \autoref{theorem:SimpleLocallyW_n}. This reduces the problem to characterizing a certain distribution on $C^\infty_c(U_0)$, which turns out to be invariant under a transitive group action and therefore a multiple of the unique invariant measure.
	
	\begin{acknowledgement}
		The initial proof was generated by Claude Fable 5 in response to prompts by Florian Besau on possible generalizations of the approach to the spherical case in \cite{KnoerrIsometryinvariantvaluations2026} to hyperbolic space, and I thank him for providing me with the output. \\
		
		This research was funded in whole or in part by the Austrian Science Fund (FWF), \href{https://www.doi.org/10.55776/PAT4205224}{10.55776/PAT4205224}. 
	\end{acknowledgement}
	
	\begin{AIuse}
		Claude Fable 5 produced the initial proof. The author verified and revised the argument, and rewrote and restructured the proof to make it more accessible. 
	\end{AIuse}

\section{Affine smooth valuations}\label{section:affineSmooth}
	We refer to \cite{SchneiderConvexbodiesBrunn2014}*{Section 6} and \cite{KlainRotaIntroductiongeometricprobability1997} for a general background on convex geometry and valuations on polytopes. In this section, we review some of the key results from \cite{KnoerrIsometryinvariantvaluations2026}. For an open and convex subset $\Omega\subset \R^n$, we denote the set of polytopes contained in $\Omega$ by $\calP(\Omega)$, which we equip with the Hausdorff metric. Let us call a valuation $\mu:\calP(\Omega)\rightarrow\R$ affine smooth if for every $P\in\calP(\R^n)$, the map
	\begin{align*}
		\{A\in \Aff(n,\R):A(P)\subset \Omega\}&\rightarrow\R\\
		A&\mapsto \mu(A(P))
	\end{align*}
	is smooth. The following result was shown in \cite{KnoerrIsometryinvariantvaluations2026} for the case $\Omega=\R^n$. The proof holds verbatim in the more general case.
	\begin{proposition}[\cite{KnoerrIsometryinvariantvaluations2026}*{Theorem 3.1}]\label{prop:localAffineSoothness}
		Let $\Omega\subset\R^n$ be open and convex, $\mu:\calP(\Omega)\to\R$ an affine smooth valuation. Then for every $P\in\calP(\R^n)$ the map 
		\begin{align*}
			\{(t,g,x)\in [0,\infty)\times \GL(n,\R)\times\R^n: g[tP+x]\subset\Omega\}&\rightarrow\R\\
			(t,g,x)&\mapsto\mu(g[tP+x])
		\end{align*}
		is smooth.
	\end{proposition}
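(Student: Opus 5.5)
The plan is to reduce the claim to the affine smoothness hypothesis by writing the map $(t,g,x)\mapsto \mu(g[tP+x])$ through affine maps, and to treat the degenerate scaling $t=0$ separately. For $t>0$ the map $y\mapsto g(ty+x)$ is the affine map $A_{t,g,x}=(tg,gx)\in\Aff(n,\R)$, and $(t,g,x)\mapsto A_{t,g,x}$ is smooth. On the open set $\{t>0\}$ the composition $\mu(g[tP+x])=\mu(A_{t,g,x}(P))$ is therefore smooth by the definition of affine smooth. The domain is relatively open, since $g[tP+x]\subset\Omega$ is an open condition by compactness of $P$. So the entire difficulty sits at $t=0$, where $tg$ is no longer invertible.

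To handle $t=0$, I would use the valuation property to decompose $P$ and reduce to simplices. By inclusion–exclusion over a triangulation, lower-dimensional faces included, $\mu(g[tP+x])$ is a finite signed sum of terms $\mu(g[tS+x])$ with $S$ a simplex, so smoothness for simplices suffices. A simplex is an affine image of a standard simplex, possibly a lower-dimensional one. Because the map $t\cdot$ commutes with affine reparametrisation up to translation, I may take $S=\mathrm{conv}(0,e_1,\dots,e_k)$ with $0$ a vertex.

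Then $g[tS+x]$ is the image of the fixed full-dimensional standard simplex $\Delta$ under a map with linear part $g\,t\,\pi_k$, where $\pi_k$ is the projection onto the first $k$ coordinates. The idea is to interpolate through invertible affine maps by first applying a fixed affine map that collapses the unwanted directions. The key device is to write $tS$ as a homothety centred at $x$. For $P\subset\Omega$-type configurations, the homothety $y\mapsto x+t(y-x)$ can be expressed as $A_{t}(Q)$, where $Q=g[S+x]$ and $A_t$ is affine with linear part $t\Id$. This again fails only at $t=0$.

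The real obstacle is that affine smoothness says nothing about degenerate limits, so I would instead exploit the polynomiality at the origin that valuations have along dilations. Concretely, I would use a telescoping valuation identity: for $0$ a vertex, decompose $S$ into $tS$ and $S\setminus tS$, where the latter is a finite union of polytopes that are images of fixed polytopes under invertible affine maps depending smoothly on $t$ up to $t=0$. Prisms between $tS$ and $S$ are such images, with maps depending smoothly on $t\in(-\epsilon,1)$. This expresses $\mu(g[tS+x])$ as $\mu(g[S+x])$ minus an inclusion–exclusion sum of terms $\mu(B_{t,g,x}(Q_j))$, with each $B_{t,g,x}$ invertible and smooth in $(t,g,x)$ including $t=0$. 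Affine smoothness then gives smoothness up to $t=0$. This is where I expect the main technical work: producing a dissection of $S\setminus tS$ into pieces that remain nondegenerate affine images as $t\to0$, and checking that all pieces stay inside $\Omega$ near a given $(0,g,x)$. For the latter, I would shrink to a small neighbourhood and use that $g[S+x]$ can be replaced by a small rescaled copy near $gx\in\Omega$. Convexity of $\Omega$ lets me work with a fixed $t_0 S$ instead of $S$.
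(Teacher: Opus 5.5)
Your reduction to simplices with a vertex at the origin is sound. It matches the cited Theorem~3.1, which according to \autoref{remark:orderDistribution} decomposes $P$ into simplices and lets the affine group act on suitable polytopes derived from them. The paper itself does not reprove the statement.

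\textbf{The gap is the central step.} For $d=\dim S\ge 2$, $\overline{S\setminus tS}$ cannot be written as a finite union of polytopes $B_t(Q_j)$ with $B_t$ invertible affine and continuous in $t$ up to $t=0$. This is an impossibility, not a technical difficulty. Indeed, $\overline{S\setminus tS}=\mathrm{conv}(tF\cup F)$, with $F$ the facet opposite $0$, has the facet $tF$ of diameter $t\,\mathrm{diam}\,F$.
\begin{enumerate}
\item Suppose finitely many polytopes inside this frustum cover it. Their intersections with $\mathrm{aff}(tF)$ cover $tF$, so one of them is $(d-1)$-dimensional.
\item Since $\mathrm{aff}(tF)$ supports the frustum, that intersection is a positive-dimensional face of the corresponding piece, of diameter at most $t\,\mathrm{diam}\,F$.
\item But the positive-dimensional faces of $B_t(Q_j)$ are images of those of $Q_j$. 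Because $B_0$ is invertible, their diameters stay bounded below as $t\to0$, which is a contradiction.
\end{enumerate}
In the plane: every triangulation of the trapezoid $\mathrm{conv}(te_1,e_1,e_2,te_2)$ contains a triangle with an edge inside $[te_1,te_2]$. The prisms you mention have faces of $tS$ as bases and degenerate in the same way. Your telescoping idea works only for $d=1$, where it reads $\mu([0,t])=\mu([0,1])+\mu(\{t\})-\mu([t,1])$. Note also that the valuation identity for $S=tS\cup\overline{S\setminus tS}$ contains the term $\mu(tF)$, a scaled lower-dimensional simplex, which your sum omits.

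\textbf{The missing idea} is to apply this one-dimensional identity along one edge at a time, and to induct on the number of shrunk edges instead of removing $tS$ in one step.
\begin{itemize}
\item For linearly independent $u_1,\dots,u_d$ set $\sigma_m(t)=\mathrm{conv}(0,tu_1,\dots,tu_m,u_{m+1},\dots,u_d)$.
\item Cutting $\sigma_{m-1}(t)$ at the point $tu_m$ of its edge $[0,u_m]$ gives $\mu(\sigma_m(t))=\mu(\sigma_{m-1}(t))-\mu(\rho_m(t))+\mu(\iota_m(t))$. Here $\rho_m(t)=\mathrm{conv}(tu_1,\dots,tu_m,u_m,\dots,u_d)$ and $\iota_m(t)=\mathrm{conv}(tu_1,\dots,tu_m,u_{m+1},\dots,u_d)$. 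At $t=0$ the identity holds trivially.
\item Seen from the vertex $tu_m$, the edge vectors of $\rho_m(t)$ are $t(u_j-u_m)$ for $j<m$, $(1-t)u_m$, and $u_j-tu_m$ for $j>m$.
\item Hence $\rho_m(t)$, and likewise $\iota_m(t)$, is the image of a simplex with only $m-1$ edges scaled by $t$. The affine map is invertible and smooth in $t$ on a full neighbourhood of $0$.
\item Prove by induction on $m$, for simplices of all dimensions at once, that $(t,A)\mapsto\mu(A[\sigma_m(t)])$ is smooth where defined ($t\ge0$). The case $m=0$ is affine smoothness.
\item The $t$-dependent affine maps are then absorbed by the chain rule, and $m=d$ gives the claim.
\end{itemize}
Your localisation inside $\Omega$ (shrinking the simplex, using convexity) carries over unchanged.
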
	
	Let $\CV(\Omega)$ denote the space of continuous valuations $\mu:\calP(\Omega)\rightarrow\R$, $\CV(\Omega)^{sm}$ the subspace of affine smooth valuations. For $x\in \Omega$ and $i\in\mathbb{N}_0$, we define the maps
	\begin{align*}
		D^i_x\mu:\calP(\R^n)&\rightarrow\R\\
		D^i_x\mu[P]&:=\frac{d^i}{dt^i}\Big|_0\mu(tP+x).
	\end{align*}
	Consider the decreasing filtration on $\CV(\Omega)^{sm}$ given by
	\begin{align*}
		W_k(\Omega):=\{\mu\in\CV(\Omega)^{sm}: D^i_x(\mu)[P]=0~ \forall P\in\calP(\R^n), x\in\Omega, 0\le i<k\}.
	\end{align*}
	The next result follows with the identical arguments as in \cite{KnoerrIsometryinvariantvaluations2026}.
	
	\begin{proposition}[\cite{KnoerrIsometryinvariantvaluations2026}*{Prop.~3.5, 3.7, 3.8}]\label{prop:PropertiesFiltration}
	Let $\mu\in\CV(\Omega)^{sm}$. Then the following holds.
	\begin{enumerate}
		\item If $\mu\in W_k(\Omega)$ and $x\in\Omega$, then $D^k_x\mu:\calP(\R^n)\to\R$ is a translation invariant, measurable, and $k$-homogeneous valuation.
		\item If $\mu$ is simple, so is $D^k_x\mu$.
		\item $W_{n+1}(\Omega)=0$.
		\item If $\mu\in W_n(\Omega)$, then $\mu$ is a smooth measure. More precisely, $\mu(P)=\frac1{n!}\int_P c(x)\,dx$ for the smooth function on $\Omega$ given by $c(x):=D^n_x\mu[P_0]$, where $P_0\in\calP(\R ^n)$ is any polytope with $\vol_n(P_0)=1$.
	\end{enumerate}
\end{proposition}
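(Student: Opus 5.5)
The statement collects four properties, each a local statement about derivatives along dilations centered at points of $\Omega$. The plan is to repeat the arguments of \cite{KnoerrIsometryinvariantvaluations2026} with the domain $\R^n$ replaced by an open convex $\Omega$. Before each step I check that it only uses polytopes $tP+x$ with $t$ small, which lie in $\Omega$ because $\Omega$ is open. \autoref{prop:localAffineSoothness} supplies all the smoothness needed, so $D^i_x\mu[P]$ is well defined and depends smoothly on $x$ and on affine images of $P$.

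For (1), I first prove the valuation property. For $P,Q,P\cup Q\in\calP(\R^n)$, the valuation identity for $\mu$ at $tP+x$, $tQ+x$ holds for all small $t>0$, and I differentiate it $k$ times at $t=0$. Homogeneity follows from $\mu((st)P+x)$, which gives $D^k_x\mu[sP]=s^kD^k_x\mu[P]$ for $s>0$.

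The core step is translation invariance. For $y\in\R^n$ we have $t(P+y)+x = tP+(x+ty)$. Taylor expanding the smooth function $(t,z)\mapsto \mu(tP+z)$ around $(0,x)$ gives terms $\frac{t^i}{i!}\partial_t^i\partial_z^\alpha\mu(tP+z)|_{(0,x)}\,(ty)^\alpha$. Because $\mu\in W_k(\Omega)$, we have $\partial_t^i\mu(tP+z)|_{t=0}=0$ for all $z$ and all $i<k$, so $z$-derivatives of these vanish as well. The only surviving term of total order $k$ in $t$ is therefore $D^k_x\mu[P]\,t^k/k!$, which yields $D^k_x\mu[P+y]=D^k_x\mu[P]$.

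Measurability comes from writing $D^k_x\mu[P]$ as a pointwise limit of difference quotients $t^{-k}\Delta^k\mu(\cdot P+x)$. Each such quotient is continuous on $\calP(\R^n)$, since $\mu$ is continuous.

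For (2), differentiating $\mu(tP+x)=0$ for lower dimensional $P$ gives simplicity directly. For (3) and (4) I invoke the extension of Hadwiger's theorem to measurable valuations \cite{KnoerrRigidmotioninvariant2026}, or more precisely its translation invariant, homogeneous part. A measurable, translation invariant, $k$-homogeneous valuation with $k>n$ vanishes, while one with $k=n$ is a multiple of volume. For $k=n$, this gives $D^n_x\mu[P]=c(x)\vol_n(P)$, with $c$ smooth by \autoref{prop:localAffineSoothness}.

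To pass to $W_{n+1}=0$, I show $\mu(P)=0$ for every $P\subset\Omega$. Fix an interior point $x$ and consider $f(t)=\mu(x+t(P-x))$ for $t\in[0,1]$. Then $f^{(i)}(0)=0$ for $i\le n$, and I want $f\equiv0$. Differentiating at $t_0>0$ is not a dilation about a point in the same sense. So I instead decompose $P$ (valuation property, inclusion–exclusion) into many small simplices, each of size $\varepsilon$. Using the uniform Taylor remainder from smoothness in $(t,g,x)$ on compact sets, each small simplex $S$ satisfies $|\mu(S)|\le C\varepsilon^{n+1}$.

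The count of pieces is what needs care. A triangulation produces $O(\varepsilon^{-n})$ simplices of dimension $n$ and also lower dimensional faces in the inclusion–exclusion. A $j$-face of size $\varepsilon$ has $\mu$ of order $\varepsilon^{n+1}$ and appears $O(\varepsilon^{-j})$ times. The total is $O(\varepsilon)\to0$, so $\mu(P)=0$.

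For (4), the valuation $\mu-\frac1{n!}\int c$ is affine smooth and lies in $W_{n+1}$. To verify this I need the derivatives of the measure $\nu(P)=\frac1{n!}\int_P c$, whose derivatives of order $i\le n$ come from the Taylor expansion of $c$. It has $D^i_x\nu=0$ for $i<n$, and the normalization is chosen so that $D^n_x\nu[P]=c(x)\vol_n(P)$; this is exactly what the factor $1/n!$ accounts for. By (3), the difference is $0$.

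The main obstacle is the uniformity in the subdivision argument for (3). It needs a Taylor bound uniform over the affine shapes of the pieces, and those shapes should range over a compact set of simplex shapes. Using a regular subdivision, such as a Freudenthal/Kuhn triangulation of a cube grid intersected with $P$, achieves this. I would handle the boundary pieces (cube cells cut by $\partial P$) by the same bound, since they are still uniformly bounded affine images of finitely many combinatorial types. If that fails, the fallback is to argue by the simpleness reduction as in the Euclidean references.
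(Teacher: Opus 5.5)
The paper gives no argument of its own here; it defers to \cite{KnoerrIsometryinvariantvaluations2026}.

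Your treatment of (1), (2), and of the reduction of (4) to (3) is sound. The Taylor expansion of $(t,z)\mapsto\mu(tP+z)$ does give translation invariance. Likewise, $\nu(P)=\frac1{n!}\int_P c$ is affine smooth with $D^i_x\nu=0$ for $i<n$ and $D^n_x\nu=c(x)\vol_n$, so $\mu-\nu\in W_{n+1}(\Omega)$. Two minor points:
\begin{itemize}
\item For $D^n_x\mu=c(x)\vol_n$ you need the classical characterization of $n$-homogeneous translation invariant valuations on polytopes, as in \autoref{corollary:distribution}. The measurable Hadwiger theorem does not apply, since it concerns rigid motion invariant valuations and $D^n_x\mu$ is only translation invariant.
\item A subdivision at scale $\varepsilon$ has $O(\varepsilon^{-n})$ faces of \emph{every} dimension, not $O(\varepsilon^{-j})$ faces of dimension $j$. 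This does not affect the conclusion.
\end{itemize}

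The genuine gap is in (3), exactly at the step you flag as the main obstacle. Your estimate $|\mu(S)|\le C\varepsilon^{n+1}$ comes from Taylor's theorem for $(t,g,y)\mapsto\mu(g[tQ+y])$ with $Q$ \emph{fixed}, on a compact parameter set inside $[0,\infty)\times\GL(n,\R)\times\R^n$. A cell $C\cap P$ cut by $\partial P$ has bounded combinatorial type, but it can be an arbitrarily flat sliver. Writing it as $g[\varepsilon Q+y]$ with $Q$ from a finite list forces $g$ toward singular matrices, where \autoref{prop:localAffineSoothness} gives no control, so there is no uniform constant. A weaker bound does not rescue the argument either: continuity and $\mu(\{y\})=0$ only give $\mu(C\cap P)=o(1)$, while there are of order $\varepsilon^{1-n}$ boundary cells.

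The fix is to avoid cut cells altogether.
\begin{itemize}
\item Triangulate $P$ once and use inclusion--exclusion to reduce to showing $\mu(T)=0$ for a single simplex $T$ of dimension $d\le n$.
\item Choose affine coordinates with $T=\{1\ge x_1\ge\dots\ge x_d\ge0\}$. The Freudenthal--Kuhn triangulation of the grid $\frac1m\mathbb{Z}^d$ then restricts to a triangulation $\mathcal{T}_m$ of $T$ into $m^d$ simplices, with no boundary slivers.
\item Every face of $\mathcal{T}_m$ has the form $y+\frac1mF$, where $y\in T$ is a vertex and $F\ni 0$ comes from a finite list depending only on $T$.
\item Only the maps $(t,y)\mapsto\mu(tF+y)$ are now needed. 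Since $\mu\in W_{n+1}(\Omega)$ kills all $t$-derivatives of order $\le n$, also for lower dimensional $F$, Taylor's theorem gives $|\mu(y+\frac1mF)|\le Cm^{-n-1}$ uniformly in $y\in T$.
\item Inclusion--exclusion over the top cells expresses $\mu(T)$ through the $O(m^d)$ values $\mu(F)$, $F\in\mathcal{T}_m$. By the Euler relation for triangulations, the coefficients are $(-1)^{d-\dim F}$ for faces not contained in $\partial T$ and $0$ otherwise.
\end{itemize}
Hence $|\mu(T)|\le C'm^{d-n-1}\to0$.
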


\begin{remark}
	The previous results heavily rely on the fact that valuations on polytopes satisfy the full Inclusion-Exclusion Principle, compare \cite{SchneiderConvexbodiesBrunn2014}*{Theorem~6.2.3}. The standard proof given in \cite{SchneiderConvexbodiesBrunn2014} is only concerned with the case $\Omega=\R^n$, however, it is easy to check that the proof holds verbatim for valuations on polytopes contained in a fixed open convex set.
\end{remark}

\section{Hyperbolic geometry}
	
\subsection{Gnomonic projection and the Beltrami--Klein model}
	Recall that we consider $\R^{n,1}=\R^{n+1}$ with the standard Lorentzian metric $\langle x,y\rangle=- x_0y_0+\sum_{i=1}^n x_iy_i$ and the hyperbolic space $\H^n$ in the hyperboloid model $\H^n=\{x\in \R^{n,1}:\langle x,x\rangle =-1, x_0>0\}$. In order to relate valuations on $\calP(\H^n)$ to affine smooth valuations on polytopes in $\R^n$, it will be convenient to consider for $u\in \H^n$ the gnomonic projection
		\begin{align*}
			G_u:\H^n&\rightarrow u^\perp=\{x\in \R^{n,1}:\langle x,u\rangle=0\}\\
			G_u(v)&=\frac{v-\langle u,v\rangle u}{-\langle u,v\rangle}.
		\end{align*}
		Its  image is $\Omega(u):=\{x\in u^\perp:\langle x,x\rangle <1\}$, which is the open ball with radius $1$ centered at the origin, and $G_u:\H^n\rightarrow \Omega(u)$ is a diffeomorphism with inverse
		\begin{align*}
			G_u^{-1}(x)=\frac{u+x}{\sqrt{1-\langle x,x\rangle}}.
		\end{align*}
		As in the spherical case, the gnomonic projection maps $v\in \H^n$ to the intersection point of the associated line with the hyperplane $u+u^\perp$. The open ball $\Omega(u)$ equipped with the induced Riemannian metric is called the Beltrami--Klein model of hyperbolic space. It has the important property that it is a projective model, i.e. geodesics in $\H^n$ are mapped to lines in $\Omega(u)$. In particular, polytopes in $\calP(\H^n)$ are in $1$-to-$1$-correspondence with polytopes in $\calP(\Omega(u))$. The following is a simple consequence of this observation.
		\begin{lemma}\label{lemma:Gnomonic}
			For every $u\in\H^n$, the map $G_u$ induces a homeomorphism between $\calP(\H^n)$ and  $\calP(\Omega(u))\subset \calP(u^\perp)$. In addition, it maps lower dimensional polytopes to lower dimensional polytopes and simplices to simplices.
		\end{lemma}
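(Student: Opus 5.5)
The plan is to reduce everything to the projective nature of $G_u$ together with a two-sided comparison of the hyperbolic and Euclidean Hausdorff metrics on compact sets.

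First, the map is well defined and bijective. By definition $G_u(v)$ is the intersection of the line $\R v$ with the affine hyperplane $u+u^\perp$, translated to $u^\perp$. Hence a two-dimensional linear subspace $E\subset\R^{n,1}$ meeting $\H^n$ is sent to the line $(E\cap(u+u^\perp))-u$. Geodesics of $\H^n$ are exactly the nonempty intersections $E\cap\H^n$, so $G_u$ maps the geodesic segment $[v,w]$ onto the Euclidean segment $[G_u(v),G_u(w)]$. Consequently a set $A\subset\H^n$ is geodesically convex if and only if $G_u(A)$ is convex. Since geodesic convex hulls and convex hulls are the smallest convex sets containing a given set, and $G_u$ is a bijection onto the convex set $\Omega(u)$, we obtain $G_u(\mathrm{conv}_h\{v_1,\dots,v_m\})=\mathrm{conv}\{G_u(v_1),\dots,G_u(v_m)\}$. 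Thus $G_u$ sends polytopes to polytopes in $\calP(\Omega(u))$. Applying the same reasoning to $G_u^{-1}$, every polytope $\mathrm{conv}\{x_i\}\subset\Omega(u)$ has vertices in $\Omega(u)$ and is the image of $\mathrm{conv}_h\{G_u^{-1}(x_i)\}$. So $G_u$ induces a bijection $\calP(\H^n)\to\calP(\Omega(u))$.

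Second, the dimension and simplex statements. A set of points $v_1,\dots,v_m$ spans a totally geodesic $k$-dimensional subspace exactly when its linear span in $\R^{n,1}$ has dimension $k+1$. This happens exactly when the corresponding points $G_u(v_i)+u$ span an affine subspace of $u+u^\perp$ of dimension $k$. Hence dimensions are preserved, and affinely independent vertex sets correspond to one another, so simplices are mapped to simplices.

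Third, the homeomorphism property, which I expect to be the only point needing care. The two Hausdorff metrics are built from two different metrics on points: $d_h$ and the Euclidean distance on $\Omega(u)$. Both $G_u$ and $G_u^{-1}$ are smooth, but neither is uniformly continuous globally, since $\Omega(u)$ is bounded while $\H^n$ is not. I would therefore argue locally. Fix $P\in\calP(\H^n)$ and a compact neighbourhood $K=\{v:d_h(v,P)\le 1\}$. On $K$ the map $G_u$ is uniformly continuous. The image $G_u(K)$ is compact in $\Omega(u)$ and contains a Euclidean $\varepsilon_0$-neighbourhood of $G_u(P)$, and $G_u^{-1}$ is uniformly continuous on $G_u(K)$.

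Now let $Q_j\to P$ in the hyperbolic Hausdorff metric. Then eventually $Q_j\subset K$, and uniform continuity of $G_u$ on $K$ gives $G_u(Q_j)\to G_u(P)$ in the Euclidean Hausdorff metric: each point of $G_u(Q_j)$ is close to a point of $G_u(P)$ and conversely. Conversely, if $G_u(Q_j)\to G_u(P)$, then eventually $G_u(Q_j)\subset G_u(K)$, and uniform continuity of $G_u^{-1}$ on $G_u(K)$ gives $Q_j\to P$.

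Since both spaces are metric spaces, sequential continuity in both directions suffices, and the induced bijection is a homeomorphism.
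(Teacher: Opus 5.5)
Your argument is correct and follows the same route as the paper, which records the lemma as a direct consequence of $G_u$ being a central projection that sends geodesics of $\H^n$ to chords of $\Omega(u)$; hence geodesic convex hulls, linear spans, dimensions and simplices all correspond. You also supply the local uniform-continuity argument comparing the two Hausdorff topologies, which the paper leaves implicit, and that argument is carried out correctly.
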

		
		\subsection{Partial action of $\PGL(n+1,\R)$ on $\H^n$ and $\calP(\H^n)$}
		
		We define a partial action of $\GL(n+1,\R)$ on $\H^n$ in the following way: If $x\in \H^n$ and $g\in\GL(n+1,\R)$ are such that $\langle gx,gx\rangle<0$ and $\langle gx,e_0\rangle<0$, then we set
		\begin{align*}
			g(x):=\frac{gx}{\sqrt{-\langle gx,gx\rangle}}.
		\end{align*}
		Note that positive multiples of the identity act trivially under this partial action, i.e. it factors through the projective general linear group $\PGL(n+1,\R)$. In fact, there is a natural interpretation of this action in terms the projectivization $\mathbb{P}(\R^{n,1})$ of $\R^{n,1}$: Note that the map $\H^n\rightarrow \mathbb{P}(\R^{n,1})$ that maps $u\in\H^n$ to the line $[u]\in \mathbb{P}(\R^{n,1})$ is a diffeomorphism onto its image. Under this identification, this action corresponds to the partial action of $\PGL(n+1,\R)$ on time-like lines in $\mathbb{P}(\R^{n,1})$, i.e. lines $[u]$ such that $\langle u,u\rangle<0$. \\	
		
		Our constructions will revolve around the fixed point $e_0\in \H^n$ and the action of the stabilizer of this point on the relevant valuations. Let $\mathcal{S}\subset \PGL(n+1,\R)$ be the stabilizer of the ray through $e_0\in \R^{n+1}$ in $\PGL(n+1,\R)$, which may be identified with the subgroup of $\GL(n+1,\R)$ consisting of the matrices
		\begin{align*}
			g_{x,A}:=\begin{pmatrix}
				1 & x^T\\
				0 & A
			\end{pmatrix}, x\in e_0^\perp, A\in\GL(e_0^\perp).
		\end{align*}
		This subgroup is isomorphic to the semi-direct product $\mathcal{N}\rtimes \mathcal{A}$, where
		\begin{align}
			\label{eq:DefFactorsStabilizer}
			\begin{split}
				\mathcal{N}=&\{n_x:=g_{x,\Id }: x\in e_0^\perp\}\cong e_0^\perp,\\ \mathcal{A}=&\{a_L:=g_{0,L}:L\in\GL(e_0^\perp)\}\cong \GL(n,\R),
			\end{split}
		\end{align}
		and $\mathcal{A}$ acts on $\mathcal{N}$ by
		\begin{align*}
			a_Ln_xa_L^{-1}=n_{L^{-T}x},
		\end{align*}
		for $ L\in\GL(e_0^\perp)$, $x\in e_0^\perp$.\\
		
		Note that the stabilizer of $e_0$ in $\SO^+(1,n)$ is $K:=\mathrm{Stab}_{\SO^+(1,n)}(e_0)\cong \SO(n)$ acting on $e_0^\perp=\mathrm{span}(e_1,\dots,e_n)\cong \R^n$. The last isomorphism uses that  $\langle\cdot,\cdot\rangle$ restricts to the standard inner product on $e_0^\perp\cong \R^n$.\\
		With respect to the point $e_0$, the partial action of $\PGL(n+1,\R)$ may be decomposed in the following way with respect to the stabilizer $\mathcal{S}$ of $e_0$ in $\PGL(n+1,\R)$ and the group $\SO^+(1,n)$, which operates transitively on $\H^n$.
		\begin{lemma}\label{lemma:partialActionHomSpace}
			Consider the set
			\begin{align*}
				U_0:=\{g\in \PGL(n+1,\R): g([e_0])~\text{defines a time-like line in}~\R^{n,1} \}.
			\end{align*}
			Then the following holds:
			\begin{enumerate}
				\item $\mathcal{S}\cap \SO^+(1,n)=K$.
				\item $U_0=\mathcal{S}\cdot \SO^+(1,n)$.
				\item The following map is a surjective submersion: 
					\begin{align*}
						H:=\mathcal{S}\times \SO^+(1,n)&\rightarrow U_0\\
						(\sigma, h)&\mapsto \sigma\cdot h^{-1}.
					\end{align*}
				\item The restriction of the Haar measure of $\PGL(n+1,\R)$ to $U_0$ is left $\mathcal{S}$-invariant and right $\SO^+(1,n)$-invariant.
			\end{enumerate}
		\end{lemma}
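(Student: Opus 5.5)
We prove the four claims in order. The basic tools are the projective picture of the partial action and the fact that $\SO^+(1,n)$ acts transitively on $\H^n$. Throughout, we view $\mathcal{S}$ as the set of classes $[g]$ in $\PGL(n+1,\R)$ that fix the line $[e_0]$. The normalization $g_{x,A}$, whose $(0,0)$-entry is $1$, is a choice of representative. Such an element preserves the line $\R e_0$ because its first column is $e_0$.

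For (1), let $h\in\SO^+(1,n)\cap\mathcal{S}$. Then $he_0=\lambda e_0$ for some $\lambda$. Since $h$ preserves $\langle\cdot,\cdot\rangle$ and the upper sheet, we get $\lambda=1$, so $h\in K$. We must also check that $K$ lies in $\mathcal{S}$ as a subgroup of $\PGL$. The class of $h\in\SO^+(1,n)$ in $\PGL(n+1,\R)$ is injective, because the only scalar in $\SO^+(1,n)$ is $1$. Also $h\in K$ has the block form $g_{0,A}$ with $A\in\SO(n)$, since $h$ preserves $e_0^\perp$.

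For (2), first take $g=\sigma h^{-1}$. Then $h^{-1}$ maps $[e_0]$ to a time-like line $[v]$. It remains to check that $\sigma\in\mathcal{S}$ maps time-like lines to time-like lines, and this is not automatic. For example, $n_x$ sends $v=e_0+w$ to $(1+\langle x,w\rangle)e_0+w$, which need not be time-like.

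So I would instead use the characterization $U_0=\{g: g[e_0]\ \text{time-like}\}$ and argue in the other direction. Given $g\in U_0$, let $u\in\H^n$ with $[u]=g[e_0]$, and choose $h\in\SO^+(1,n)$ with $h e_0=u$ by transitivity. Then $h^{-1}g$ fixes $[e_0]$, so $h^{-1}g\in\mathcal{S}$. This gives $g=h\sigma$, that is, $U_0=\SO^+(1,n)\cdot\mathcal{S}$.

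The statement writes $\mathcal{S}\cdot\SO^+(1,n)$ with $H(\sigma,h)=\sigma h^{-1}$. I expect the main subtlety of the whole lemma to be pinning down the intended convention here. The definition of $U_0$ as written is right-$\mathcal{S}$-invariant. The factorization $\sigma h^{-1}$ matches instead the set of $g$ with $g^{-1}[e_0]$ time-like, since then $h^{-1}$ is determined modulo $K$ by where $g^{-1}$ sends $[e_0]$. I would therefore reconcile the two by reading the action as the one relevant for the mollification. This means replacing $g$ by $g^{-1}$ (equivalently, applying inversion, a diffeomorphism of $\PGL$ that swaps the roles of left and right). Under that reading, both (2) and (3) follow from the computation above applied to $g^{-1}$.

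For (3), $H$ is surjective by (2). For the submersion property, we use equivariance. $H$ is equivariant for the left action of $\mathcal{S}$ and the right action of $\SO^+(1,n)$, both of which act on $U_0$ by diffeomorphisms. Hence it suffices to check that $dH$ is surjective at $(e,e)$, where $dH(X,Y)=X-Y$ with $X\in\mathfrak{s}$ and $Y\in\mathfrak{so}(1,n)$. So we need $\mathfrak{s}+\mathfrak{so}(1,n)=\mathfrak{pgl}(n+1)$. Here $\mathfrak{s}$ consists of the matrices whose first column is a multiple of $e_0$, and this has codimension $n$. The boosts in $\mathfrak{so}(1,n)$ have first column $(0,y)$ with $y$ arbitrary, so they fill in the missing first-column directions. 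As a dimension check, $\dim\mathfrak{s}=n^2+n$, $\dim\mathfrak{so}(1,n)=n(n+1)/2$, and their intersection is $\mathfrak{so}(n)$, of dimension $n(n-1)/2$. Together these give $n^2+2n=\dim\PGL(n+1,\R)$.

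For (4), $U_0$ is open, being a preimage of the open time-like cone, and it is stable under left multiplication by $\mathcal{S}$ and right multiplication by $\SO^+(1,n)$ (after the same convention fix). The group $\PGL(n+1,\R)$ is unimodular; for instance, $\mathrm{SL}(n+1,\R)$ covers its identity component with finite kernel. Hence its Haar measure is invariant under both left and right translations. Restricting to the invariant open set $U_0$ preserves these invariances.
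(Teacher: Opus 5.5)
Your proof is correct. For (4) it is the paper's own argument: $\PGL(n+1,\R)$ is unimodular and $U_0$ is invariant under the relevant left and right translations. For (1)--(3), which the paper dismisses as immediate, you supply the natural details, namely transitivity of $\SO^+(1,n)$ on $\H^n$ and the count $\mathfrak{s}+\mathfrak{so}(1,n)=\mathfrak{pgl}(n+1,\R)$ at the identity combined with equivariance.

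Your diagnosis of the convention is also right. With $U_0$ defined via $g([e_0])$ one gets $U_0=\SO^+(1,n)\cdot\mathcal{S}$, and for a boost $h$ and large $t$ we have $g_t h\notin U_0$, so (2) fails as literally written. The intended set is $\{g: g^{-1}([e_0])~\text{time-like}\}=\mathcal{S}\cdot\SO^+(1,n)$, which is what you prove. It is the set compatible with $H$, with $\pi(\sigma,h)$, and with the term $\mu(g^{-1}(P))$ in the mollification.
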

		\begin{proof}
			The only nontrivial statement is the invariance of the Haar measure, however, this is a simple consequence of the fact that $\PGL(n+1,\R)$ is a unimodular group, i.e. the Haar measure is both left and right invariant.
		\end{proof}
		Let us consider $H$ as a Lie group with componentwise multiplication. The previous result has the following direct consequence.
		\begin{corollary}
			Let $K_\Delta$ be the subgroup of $H$ given by the diagonal embedding of $K$ into $H$, i.e. $K_\Delta=\{(k,k)\in\mathcal{S}\times \SO^+(1,n): k\in K\}$. Then $U_0\cong H/K_{\Delta}$ as a homogeneous space of $H$ with respect to the action $(\sigma,h)\cdot g:=\sigma \cdot g\cdot h^{-1}$.
		\end{corollary}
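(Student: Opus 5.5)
The plan is to verify the three ingredients that make $U_0$ a homogeneous space of $H$: the formula $(\sigma,h)\cdot g:=\sigma g h^{-1}$ defines a smooth action of $H$ on $U_0$; this action is transitive; and the stabilizer of the class of the identity is exactly $K_\Delta$. Once these hold, the orbit map $H\to U_0$, $(\sigma,h)\mapsto \sigma\cdot[\Id]\cdot h^{-1}=H(\sigma,h)$, descends to a continuous bijection $H/K_\Delta\to U_0$. Because $H$ is a surjective submersion by Lemma~\ref{lemma:partialActionHomSpace}(3), this bijection is a diffeomorphism.

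\emph{Well-definedness.} Take $g\in U_0$, $\sigma\in\mathcal{S}$ and $h\in\SO^+(1,n)$. The element $h^{-1}$ preserves the set of time-like lines, so $g h^{-1}$ still sends $[e_0]$ to a time-like line, and hence $gh^{-1}\in U_0$. This is not quite enough for $\sigma g h^{-1}$, since $\sigma$ only fixes $[e_0]$ and need not preserve time-like lines in general. I will therefore not argue this directly. Instead I use Lemma~\ref{lemma:partialActionHomSpace}(2) to write $g=\sigma' h'^{-1}$, so that
\begin{align*}
\sigma g h^{-1}=(\sigma\sigma')(hh')^{-1}\in \mathcal{S}\cdot\SO^+(1,n)=U_0 .
\end{align*}
The action axioms are immediate from associativity, and smoothness follows from smoothness of multiplication in $\PGL(n+1,\R)$.

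\emph{Transitivity and stabilizer.} Transitivity is exactly Lemma~\ref{lemma:partialActionHomSpace}(2): every $g\in U_0$ equals $\sigma h^{-1}=(\sigma,h)\cdot\Id$, and $\Id\in U_0$. For the stabilizer, $(\sigma,h)\cdot\Id=\Id$ means $\sigma=h$. This common element lies in $\mathcal{S}\cap\SO^+(1,n)=K$ by Lemma~\ref{lemma:partialActionHomSpace}(1). Conversely, every $(k,k)$ with $k\in K$ fixes $\Id$, so the stabilizer is $K_\Delta$.

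\emph{Smooth structure.} The induced map $H/K_\Delta\to U_0$ is a smooth bijection. Its composition with the quotient map is the submersion $H$, so it is itself a submersion. A bijective submersion is a local diffeomorphism, hence a diffeomorphism. It is $H$-equivariant by construction, which gives $U_0\cong H/K_\Delta$ as homogeneous $H$-spaces. Alternatively, one can cite the standard fact that a transitive smooth action of a Lie group with closed stabilizer yields a diffeomorphism from the quotient; here $K_\Delta$ is compact and therefore closed.

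\emph{Main point of care.} The only step requiring attention is the well-definedness of the action, namely that left multiplication by $\mathcal{S}$ preserves $U_0$. This is handled through the product decomposition in (2) rather than through any geometric preservation property of $\mathcal{S}$. Everything else is formal.
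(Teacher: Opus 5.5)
Your proof is correct and takes the paper's route: the paper states this corollary as a direct consequence of Lemma~\ref{lemma:partialActionHomSpace}, using part (2) for closure of $U_0$ under the action and transitivity, part (1) for the stabilizer $K_\Delta$, and part (3) for the identification $H/K_\Delta\cong U_0$, exactly as you do. In your unused aside the composition order is reversed ($gh^{-1}[e_0]=g(h^{-1}[e_0])$, so invariance of time-like lines under $h^{-1}$ does not help), and your worry about $\sigma$ is in fact justified for the literal definition of $U_0$, which equals $\SO^+(1,n)\cdot\mathcal{S}$; part (2) holds for the intended set $\{g: g^{-1}([e_0])\ \text{time-like}\}$, where closure is immediate because $(\sigma g h^{-1})^{-1}[e_0]=h\,(g^{-1}[e_0])$.
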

		Consider the subset 
		\begin{align*}
			\mathcal{D}=\{(g,P)\in\PGL(n+1,\R)\times \calP(\H^n): g(P)\subset \H^n\}.
		\end{align*}
		This is an open subset of $\PGL(n+1,\R)\times\calP(\H^n)$ and we obtain a continuous partially defined action
		\begin{align*}
			\mathcal{D}&\rightarrow\calP(\H^n)\\
			(g,P)&\mapsto g(P).
		\end{align*}
			Next, we are going to consider the interaction of the partial action of $\PGL(n+1,\R)$ on $\H^n$ and the gnomonic projection $G_{e_0}$ at the point $e_0$. Using the notation in Eq.~\eqref{eq:DefFactorsStabilizer}, we define for $t>0$, the elements $g_t:=a_{t\Id_{e_0^\perp}}\in \mathcal{A}$ and consider the maps $\tau_x:=n_x^T$, $x\in e_0^\perp$ in $\PGL(n+1,\R)$, i.e. $\tau_x$ is induced by the matrix in $\GL(n+1,\R)$ given by
		\begin{align*}
			\begin{pmatrix}
				1 &0\\
				x & \mathrm{Id_n}
			\end{pmatrix}.
		\end{align*}

		We note the following relation between the partial action of the affine group on $\calP(\Omega(e_0))$ and this partial action.
		\begin{corollary}\label{corollary:equivariancePartialActions}
			Let $P\in\calP(e_0^\perp)$ and $L\in\GL(e_0^\perp)$, $x\in \Omega(e_0)$ be such that $LP+x\subset \Omega(e_0)$. Then
			\begin{align*}
				G_{e_0}^{-1}(LP+x)=\tau_xa_L(G_{e_0}^{-1}(P)).
			\end{align*}
		\end{corollary}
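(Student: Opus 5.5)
The claim is a direct matrix computation using $G_{e_0}^{-1}(y)=\frac{e_0+y}{\sqrt{1-\langle y,y\rangle}}$. Since $\tau_x a_L\in\PGL(n+1,\R)$ and the partial action factors through $\PGL(n+1,\R)$, it is enough to check the identity pointwise on vertices, or on all points. Both sides are geodesically convex hulls: the left side because $G_{e_0}$ is projective, as in \autoref{lemma:Gnomonic}, and the right side because projective maps send geodesics, i.e. intersections of $\H^n$ with $2$-planes through the origin, to geodesics whenever the image stays in $\H^n$.

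\emph{Pointwise computation.} Fix $y\in P$, and assume first that $y\in\Omega(e_0)$.
\begin{enumerate}
	\item Write $v=G_{e_0}^{-1}(y)=\lambda(e_0+y)$ with $\lambda>0$.
	\item The matrix $a_L$ fixes $e_0$ and acts by $L$ on $e_0^\perp$, so $a_Lv=\lambda(e_0+Ly)$.
	\item The matrix of $\tau_x$ sends $e_0\mapsto e_0+x$ and fixes $e_0^\perp$, so $\tau_xa_Lv=\lambda(e_0+x+Ly)$.
	\item By hypothesis $x+Ly\in\Omega(e_0)$, so $\langle w,w\rangle=\lambda^2(-1+|x+Ly|^2)<0$ and $\langle w,e_0\rangle=-\lambda<0$ for $w=\tau_xa_Lv$.
\end{enumerate}
Hence the partial action is defined at $v$. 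It produces the normalization of $e_0+(Ly+x)$, which is exactly $G_{e_0}^{-1}(Ly+x)$.

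\emph{Points outside $\Omega(e_0)$.} If $P\not\subset\Omega(e_0)$, the left side only involves $LP+x$, so $G_{e_0}^{-1}(P)$ has to be read as the set of normalized vectors $e_0+y$, $y\in P$. I expect the paper either tacitly assumes $P\subset\Omega(e_0)$ or intends this formal expression. In the latter case the same linear computation applies to the rays $[e_0+y]$, and the right side is defined precisely because $LP+x\subset\Omega(e_0)$.

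\emph{Main obstacle.} The only delicate point is this bookkeeping of domains: making sure the composite $\tau_xa_L$ is applied as one element of $\PGL(n+1,\R)$ rather than as two successive partial actions. Once that is settled, the rest is the linear computation above.
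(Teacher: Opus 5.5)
Your proposal is correct and is the argument the paper has in mind: the paper states this corollary without proof, as an immediate consequence of the fact that $G_{e_0}^{-1}$, read projectively, is the affine chart $y\mapsto[e_0+y]$, in which the matrix $\tau_xa_L$ acts by $y\mapsto Ly+x$. Your pointwise computation verifies exactly this at every point of $P$, so the convex-hull remark is not even needed, and your two side observations are accurate: $P\subset\Omega(e_0)$ is tacitly assumed, and $\tau_xa_L$ must be applied as a single element of $\PGL(n+1,\R)$ rather than as two successive partial actions.
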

		In other words, $\tau_x$ acts as a translation in the chart.
	
\section{Continuous valuations on $\calP(\H^n)$}
	\subsection{Local smoothing}
		\label{section:localSmoothing}
		The set $U_0\subset\PGL(n+1,\R)$ considered in \autoref{lemma:partialActionHomSpace} is obviously an open neighborhood of the identity. We will use this set to construct locally defined mollifications of elements in $\CV(\H^n)$. We require the following notation: For $b\in \PGL(n+1,\R)$ and a function $\psi\in C^\infty_c(U_0)$, we set
		\begin{align*}
			\psi^b(g):=\psi(bg),\quad g\in U_0,
		\end{align*}
		whenever this is well defined. Since $U_0$ is a homogeneous space with respect to the action of $H=\mathcal{S}\times \SO^+(1,n)$ considered in \autoref{lemma:partialActionHomSpace}, we obtain a well defined and continuous action on $C^\infty_c(U_0)$ given by
		\begin{align*}
			[\pi(\sigma,h)\psi](g):=\psi(\sigma^{-1}\cdot g\cdot h),\quad (\sigma,h)\in H,~g\in U_0.
		\end{align*}

		\begin{definition}
			Let $A\subset U_0$ be a compact subset and consider the sets 
			\begin{align*}
				\calP_A(\H^n):=&\{P\in\calP(\H^n): (g^{-1},P)\in\mathcal{D}~\text{for all}~g\in A\}\\
				C_A^\infty(U_0):=&\{\psi\in C^\infty_c(U_0):\supp \psi\subset A\}.
			\end{align*}
			For $\mu\in \CV(\H^n)$ and $\psi\in C_c(U_0)$, we define $\mu_{\psi}:\calP_{\supp\psi}(\H^n)\rightarrow\R$ by
				\begin{align*}
				\mu_\psi(P):=\int_{\PGL(n+1,\R)} \psi(g)\mu(g^{-1}(P))d\lambda_{\PGL(n+1,\R)}.
			\end{align*}
		\end{definition}
		Here, we denote by $\lambda_G$ a Haar measure on a Lie group $G$. Note that the integral is well defined due to the restrictions imposed on the polytopes.

		\begin{lemma}\label{lemma:propertiesLocalSmoothing}
			Let $\mu\in \CV(\H^n)$ and $\psi\in C^\infty_c(U_0)$.
			\begin{enumerate}
				\item If $\mu$ is $\SO^+(1,n)$-invariant, then  $\mu_{R_h\psi}=\mu_{\psi}$ for all $h\in\SO^+(1,n)$, where $R_h\psi(g):=\psi(gh)$ for $g\in U_0$.
				\item For every $g\in \PGL(n+1,\R)$ such that $\psi^g\in C^\infty_c(U_0)$, we have $\mu_{\psi^g}(P)=\mu_\psi(gP)$ for all $P\in\calP(\H^n)$ such that both sides are defined.
				\item If $\mu$ is simple, then $\mu_\psi$ vanishes on lower dimensional polytopes in $\calP_{\supp\psi}(\H^n)$.
				\item For any fixed $P\in\calP(\H^n)$, the map defined on the open set
				\begin{align*}
					\{g\in \PGL(n+1,\R):g(P)\in \calP_{\supp\psi}(\H^n)\}&\rightarrow\R\\
					g&\mapsto \mu_\psi(gP)
				\end{align*}
				is smooth.
				\item If $(\psi_j)_j$ is a smooth approximation of the identity in $C^\infty_c(U_0)$ with supports shrinking to $\{\Id\}$, then 
				\begin{align*}
					\lim_{j\rightarrow\infty} \mu_{\psi_j}(P)=\mu(P)
				\end{align*}
				pointwise on $\calP(\H^n)$.
			\end{enumerate}
		\end{lemma}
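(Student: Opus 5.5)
The five items are checked one at a time. Each follows from the definition of $\mu_\psi$ together with invariance of the Haar measure $\lambda=\lambda_{\PGL(n+1,\R)}$ (left and right, since $\PGL(n+1,\R)$ is unimodular) and continuity of the partial action $\mathcal{D}\to\calP(\H^n)$.

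\emph{Items (1)–(3).} For (1), I substitute $g\mapsto gh^{-1}$ in the integral defining $\mu_{R_h\psi}(P)$. This gives $\int\psi(g)\mu(hg^{-1}(P))\,d\lambda$, using right invariance of $\lambda$ and the identity $(gh^{-1})^{-1}=hg^{-1}$. Because $h$ is an isometry, $g^{-1}(P)\subset\H^n$ forces $hg^{-1}(P)\subset\H^n$, and $\SO^+(1,n)$-invariance of $\mu$ turns the integrand into $\mu(g^{-1}(P))$. One should also note that the domains $\calP_{\supp R_h\psi}$ and $\calP_{\supp\psi}$ coincide, for the same reason. For (2), the substitution $g\mapsto g^{-1}g'$ (left invariance) gives $\mu_{\psi^g}(P)=\int\psi(g')\mu(g'^{-1}g(P))\,d\lambda(g')=\mu_\psi(gP)$. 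Here the partial action is compatible with composition wherever it is defined, since it comes from the linear action on time-like lines. For (3), a projective map sends a lower dimensional polytope to a lower dimensional one, because it preserves the dimension of the linear span in $\R^{n+1}$. Hence the integrand vanishes identically when $\mu$ is simple.

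\emph{Item (4).} This is the main point. By (2), or directly by the substitution $g'\mapsto g^{-1}g'$, we have
\begin{align*}
\mu_\psi(gP)=\int_{\PGL(n+1,\R)}\psi(g^{-1}g')\,\mu(g'^{-1}(P))\,d\lambda(g').
\end{align*}
All dependence on $g$ now sits in the smooth compactly supported factor $\psi(g^{-1}g')$. For $g$ in a small neighborhood $V$ of a fixed $g_1$ in the open domain, the set $g'\in V\cdot\supp\psi$ over which we integrate is compact and contained in the set where $g'^{-1}(P)$ is defined. The factor $g'\mapsto\mu(g'^{-1}(P))$ is continuous there, hence bounded. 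Differentiation under the integral sign, applied to all orders of derivatives of $\psi$, then gives smoothness. The care needed is in checking that the domains match: $g(P)\in\calP_{\supp\psi}$ must guarantee that $(g^{-1}g')^{-1}$ applied to $gP$ is defined and equals $g'^{-1}(P)$, locally uniformly in $g$. Openness of $\mathcal{D}$ and compactness of $\supp\psi$ handle this.

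\emph{Item (5).} Fix $P$. Since $\supp\psi_j$ shrinks to $\{\Id\}$ and $\mathcal{D}$ is open, for large $j$ the set $\supp\psi_j$ lies in a compact neighborhood $A$ of $\Id$ with $P\in\calP_A(\H^n)$. On $A$ the map $g\mapsto\mu(g^{-1}(P))$ is continuous. Taking $\int\psi_j=1$ and $\psi_j\ge0$, we get
\begin{align*}
|\mu_{\psi_j}(P)-\mu(P)|\le\sup_{g\in\supp\psi_j}|\mu(g^{-1}P)-\mu(P)|\to0.
\end{align*}
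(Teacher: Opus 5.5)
Your proof is correct and follows the paper's route. The paper only asserts that (1)--(3) follow from the definition and the invariance of the Haar measure, and that (4)--(5) are standard convolution facts; you supply those details, including the domain checks. One small slip in (4): the correct substitution is $g'\mapsto gg'$, which gives $\mu_\psi(gP)=\int\psi(gg')\mu(g'^{-1}(P))\,d\lambda(g')$, exactly what your (2) yields. The integration region is then $V^{-1}\cdot\supp\psi$ rather than $V\cdot\supp\psi$, and this does not affect the differentiation-under-the-integral argument.
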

		\begin{proof}
			The first three claims follow directly from the definition and the invariance of the Haar measure, while the last two claims are standard properties of convolutions of smooth and continuous functions on Lie groups. For the last step, this uses that any $P\in\calP(\H^n)$ belongs to $\calP_{\supp_{\psi_j}}(\H^n)$ for all $j$ large enough, since the supports shrink to the identity, so the expression in the limit is well defined for $j$ large enough.
		\end{proof}
		
		For a relatively compact open convex subset $\Omega\subset \Omega(e_0)\subset e_0^\perp$, we may consider the family of polytopes $G_{e_0}^{-1}(\calP(\Omega))\subset \calP(\H^n)$. Since this family is contained in a bounded subset of $\H^n$, there exists a compact neighborhood $A$ of the identity in $U_0$ such that $G_{e_0}^{-1}(\calP(\Omega))\subset\calP_{A}(\H^n)$. Conversely, if $A\subset U_0$ is compact, then there is an open and convex neighborhood $\Omega\subset \Omega(e_0)$ of the origin such that $G^{-1}_{e_0}(\calP(\Omega))\subset \calP_A(\H^n)$.
		
		\begin{corollary}\label{corollary:localAffineSmoothness}
			Let $\mu\in \CV(\H^n)$ and $\Omega\subset \Omega(e_0)$ be a relatively compact open convex subset. For every $\psi\in C^\infty_c(U_0)$ such that $G_{e_0}^{-1}(\calP(\Omega))\subset \calP_{\supp\psi}(\H^n)$, the map $G_{e_0*}\mu_\psi:\calP(\Omega)\rightarrow\R$ defined by
			\begin{align*}
				G_{e_0*}\mu_\psi (P):=\mu_{\psi}(G_{e_0}^{-1}(P))
			\end{align*}
			defines an affine smooth valuation in $\CV(\Omega)$.
		\end{corollary}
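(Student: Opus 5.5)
The plan is to check the two defining properties of $G_{e_0*}\mu_\psi$ separately: that it is a continuous valuation on $\calP(\Omega)$, and that it is affine smooth. Both should follow by transporting the properties of $\mu_\psi$ from \autoref{lemma:propertiesLocalSmoothing} through the gnomonic projection, using \autoref{lemma:Gnomonic} and \autoref{corollary:equivariancePartialActions}.

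\emph{Step 1: well-definedness and the valuation property.} By assumption, $G_{e_0}^{-1}(\calP(\Omega))\subset\calP_{\supp\psi}(\H^n)$, so $\mu_\psi(G_{e_0}^{-1}(P))$ is defined for every $P\in\calP(\Omega)$. For the valuation property, take $P,Q\in\calP(\Omega)$ with $P\cup Q\in\calP(\Omega)$. The geodesic convex hull corresponds to the affine convex hull in the projective model, so $G_{e_0}^{-1}$ commutes with unions and intersections. It therefore suffices to show that $\mu_\psi$ is a valuation on $\calP_{\supp\psi}(\H^n)$. For each fixed $g\in\supp\psi$, the map $g^{-1}$ acts on $\H^n$ as a projective transformation, which preserves unions and intersections of polytopes. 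Hence $P\mapsto\mu(g^{-1}(P))$ is a valuation, and integrating against $\psi(g)$ preserves the valuation identity.

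\emph{Step 2: continuity.} Continuity of $G_{e_0*}\mu_\psi$ follows from continuity of $\mu_\psi$ together with \autoref{lemma:Gnomonic}. To see that $\mu_\psi$ is continuous on $\calP_{\supp\psi}(\H^n)$, note that $(g,P)\mapsto\mu(g^{-1}(P))$ is continuous on the open set $\mathcal{D}$. On the compact set $\supp\psi\times\{P\}$ it is bounded, and it stays uniformly bounded near $P$ by a compactness argument in $\mathcal{D}$. Dominated convergence then gives continuity.

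\emph{Step 3: affine smoothness.} This is the key step. Fix $P_0\in\calP(\R^n)$, identified with $\calP(e_0^\perp)$, and consider affine maps $A=L(\cdot)+x$ with $A(P_0)\subset\Omega$. The difficulty is that $P_0$ itself need not lie in $\Omega(e_0)$. I would first choose $s>0$ small with $sP_0\subset\Omega(e_0)$ and replace $L$ by $L/s$, so without loss of generality $P_0\subset\Omega(e_0)$. Then \autoref{corollary:equivariancePartialActions} gives
\[
G_{e_0}^{-1}(LP_0+x)=\tau_x a_L\bigl(G_{e_0}^{-1}(P_0)\bigr).
\]
Hence
\[
G_{e_0*}\mu_\psi(A(P_0))=\mu_\psi\bigl(\tau_xa_L(Q_0)\bigr),\qquad Q_0:=G_{e_0}^{-1}(P_0).
\]
The map $(L,x)\mapsto\tau_xa_L\in\PGL(n+1,\R)$ is smooth, since it is polynomial in the matrix entries. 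Its image lies in the open set where $g(Q_0)\in\calP_{\supp\psi}(\H^n)$, because $A(P_0)\subset\Omega$. By \autoref{lemma:propertiesLocalSmoothing}(4), $g\mapsto\mu_\psi(gQ_0)$ is smooth on that set. Composing the two smooth maps gives smoothness in $A$.

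I expect the main obstacle to be the subtle point in Step 3: applying \autoref{corollary:equivariancePartialActions} requires $P_0\subset\Omega(e_0)$, and this is handled by the rescaling. Rescaling changes the parametrization of the affine group only by the diffeomorphism $L\mapsto sL$, so smoothness is unaffected.
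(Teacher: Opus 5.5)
Your proposal is correct and follows the same route as the paper, which gets the continuous valuation property from \autoref{lemma:Gnomonic} and affine smoothness from \autoref{lemma:propertiesLocalSmoothing}(4) together with \autoref{corollary:equivariancePartialActions}; you supply details the paper leaves implicit, such as the tube-lemma continuity argument and the rescaling $P_0\mapsto sP_0$. One minor point: \autoref{corollary:equivariancePartialActions} as stated also assumes $x\in\Omega(e_0)$, which can fail for the translation part of $A$, so either note that the identity $G_{e_0}^{-1}(LP_0+x)=\tau_xa_L(G_{e_0}^{-1}(P_0))$ holds by direct computation without this assumption, or also translate $P_0$ so that $0\in P_0$ (reparametrizing $x\mapsto x+Lp_0$, a diffeomorphism of the affine group), after which the translation part is $A(0)\in\Omega$.
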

		\begin{proof}
			By construction, the map is well defined. Moreover, it follows directly from \autoref{lemma:Gnomonic} that this defines a continuous valuation. The fact that $G_{e_0*}\mu_\psi$ is affine smooth follows from \autoref{lemma:propertiesLocalSmoothing} in combination with the equivariance properties of the gnomonic projection in \autoref{corollary:equivariancePartialActions}.
		\end{proof}

	\subsection{Definition and properties of $\Lambda^\mu_k$}
		Let $\mu\in\CV(\H^n)$. For $\psi\in C^\infty_c(U_0)$, we define
		\begin{align*}
			\Lambda^\mu_k(\psi):\calP(e_0^\perp)\rightarrow\R
		\end{align*}
		by
		\begin{align*}
			\Lambda^\mu_k(\psi)[P]:=D^k_0(G_{e_0*}\mu_\psi)[P]=s^{-k}\frac{d^k}{dt^k}\Big|_0\mu_{\psi}\left(g_tG_{e_0}^{-1}(sP)\right),
		\end{align*}
		where we may choose any $s>0$ such that $G_{e_0}^{-1}(sP)\in \calP_{\supp\psi}(\H^n)$ due to the chain rule. Note that this is well-defined due to \autoref{corollary:localAffineSmoothness} and \autoref{prop:localAffineSoothness}, and that we used the equivariance properties of the gnomonic projection from \autoref{corollary:equivariancePartialActions} in the last step, where $g_t=a_{t\Id _{e_0^\perp}}$ by definition. 
		
		While the filtration in \autoref{section:affineSmooth} relies on the properties of $D^k_x(G_{e_0*}\mu_\psi)$ for arbitrary $x$ in the relevant domain, we may use the additional dependence on $\psi\in C^\infty_c(U_0)$ to deal with these translations. More precisely, the following holds.
		\begin{lemma}\label{lemma:TranslationMaps}
			Let $\psi\in C^\infty_c(U_0)$. If $x\in \Omega(e_0)$ is such that $\psi^{\tau_x}\in C^\infty_c(U_0)$, then 
			\begin{align*}
				D_x^k(G_{e_0*}\mu_\psi)=\Lambda^\mu_k(\psi^{\tau_x}).
			\end{align*}
			In particular, there exists a neighborhood $V_{\supp\psi}\subset e_0^\perp$ of $0$ depending on $\supp\psi$ only such that this relation holds for all $x\in V_{\supp\psi}$.
		\end{lemma}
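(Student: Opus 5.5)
Unwinding the definitions, the plan is to reduce the identity to the translation-equivariance of the gnomonic chart (\autoref{corollary:equivariancePartialActions}) together with property (2) of \autoref{lemma:propertiesLocalSmoothing}.

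Fix $P\in\calP(e_0^\perp)$. By definition,
\begin{align*}
D^k_x(G_{e_0*}\mu_\psi)[P]=\frac{d^k}{dt^k}\Big|_0\mu_\psi\left(G_{e_0}^{-1}(tP+x)\right).
\end{align*}
For $t$ small, \autoref{corollary:equivariancePartialActions} with $L=t\Id$ gives $G_{e_0}^{-1}(tP+x)=\tau_x\bigl(G_{e_0}^{-1}(tP)\bigr)$. For $t>0$ it also gives $G_{e_0}^{-1}(tP)=g_t\bigl(G_{e_0}^{-1}(P)\bigr)$, up to the rescaling by $s$ used in the definition of $\Lambda^\mu_k$. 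At $t=0$ both sides are degenerate points, and the formula $tP$ is used directly.

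Next, apply \autoref{lemma:propertiesLocalSmoothing}(2) with $g=\tau_x$:
\begin{align*}
\mu_\psi\bigl(\tau_x Q\bigr)=\mu_{\psi^{\tau_x}}(Q)\quad\text{with } Q=G_{e_0}^{-1}(tP).
\end{align*}
This holds whenever both sides are defined, which requires $Q\in\calP_{\supp\psi^{\tau_x}}(\H^n)$ and $\tau_xQ\in\calP_{\supp\psi}(\H^n)$. Since $\supp\psi^{\tau_x}=\tau_x^{-1}\supp\psi$ and $(\tau_x^{-1}g)^{-1}=g^{-1}\tau_x$, these two conditions coincide. Moreover, for $t$ small, $Q$ is close to the point $e_0$ and $\tau_xQ$ is close to $G_{e_0}^{-1}(x)$. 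Hence both sides are defined for $t$ in a neighborhood of $0$, as long as $G_{e_0}^{-1}(x)$ can be moved by every $g^{-1}$, $g\in\supp\psi$, into $\H^n$.

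This last point needs justification. The hypothesis $\psi^{\tau_x}\in C^\infty_c(U_0)$ gives that $g^{-1}\tau_x$ maps $e_0$ to a time-like line for all $g\in\supp\psi$. In other words, the point $\tau_x(e_0)=G_{e_0}^{-1}(x)$ lies in the domain of $g^{-1}$, and openness handles small polytopes around it. Differentiating $k$ times at $t=0$ then yields $D^k_x(G_{e_0*}\mu_\psi)[P]=D^k_0(G_{e_0*}\mu_{\psi^{\tau_x}})[P]=\Lambda^\mu_k(\psi^{\tau_x})[P]$. The derivative is a local quantity, so only small $t$ matters, and by the chain-rule remark any rescaling $s$ is harmless.

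For the final claim, let $A=\supp\psi$, which is compact. The set $\{x:\tau_x^{-1}A\subset U_0\}$ is open, because $U_0$ is open, $A$ is compact, and $(x,g)\mapsto\tau_x^{-1}g$ is continuous. It contains $0$, since $\tau_0=\Id$. Take $V_{\supp\psi}$ to be this set intersected with $\Omega(e_0)$. For $x$ in it, $\psi^{\tau_x}$ is smooth with compact support $\tau_x^{-1}A\subset U_0$, so the first part applies.

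I expect the main obstacle to be the bookkeeping of domains: checking that all partial actions and the chart identity are defined for small $t$. The algebra itself is immediate.
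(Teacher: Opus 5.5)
Your proposal is correct and is essentially the paper's argument: you rewrite $G_{e_0}^{-1}(tP+x)$ as $\tau_x G_{e_0}^{-1}(tP)$ via \autoref{corollary:equivariancePartialActions}, shift $\tau_x$ onto the mollifier with \autoref{lemma:propertiesLocalSmoothing}(2) to land on $D^k_0(G_{e_0*}\mu_{\psi^{\tau_x}})=\Lambda^\mu_k(\psi^{\tau_x})$, and obtain $V_{\supp\psi}$ from openness of $U_0$ and compactness of $\supp\psi$. Your domain check is more careful than the paper's, but note that the inference ``$\psi^{\tau_x}\in C^\infty_c(U_0)$ implies $g^{-1}\tau_x[e_0]$ is time-like'' holds only under the reading $U_0=\{g: g^{-1}[e_0]\ \text{time-like}\}=\mathcal{S}\cdot\SO^+(1,n)$. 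That is the convention the paper actually needs (cf.\ \autoref{lemma:partialActionHomSpace}(2) and the use of $g^{-1}$ in $\mu_\psi$), not its literal definition via $g[e_0]$, and the two readings differ because $g\mapsto g^{-1}$ does not preserve the set of $g$ with $g[e_0]$ time-like.
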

		\begin{proof}
			Due to \autoref{corollary:equivariancePartialActions}, we have
			\begin{align*}
				D_x^k(G_{e_0*}\mu_\psi)[P]=&\frac{d^k}{dt^k}\Big|_0\mu_{\psi}(G_{e_0}^{-1}(tP+x))=\frac{d^k}{dt^k}\Big|_0\mu_{\psi}(\tau_x g_tG_{e_0}^{-1}(P))\\
				=&\frac{d^k}{dt^k}\Big|_0\mu_{\psi^{\tau_x}}( g_tG_{e_0}^{-1}(P)),
			\end{align*}
			where we used \autoref{lemma:propertiesLocalSmoothing} (2) in the last step. This shows the first claim. For the second claim, observe that $\psi^{\tau_x}\in C_c(U_0)$ for all $x$ in a small neighborhood of $0$.
		\end{proof}
		
		Since it is slightly cumbersome to keep track of the domains of these locally defined valuations, we use the previous result to introduce a more easily to handle variant of the filtration from \autoref{section:affineSmooth}. 
		\begin{definition}
			Let $\mu\in \CV(\H^n)$. We will say that $\mu$ belongs to $W_k^{\loc}$ if $\Lambda^\mu_i$ vanishes identically for all $0\le i<k$.
		\end{definition}
		The benefit of this notion is that we are just considering the countable family of functionals $\Lambda^\mu_i:C^\infty_c(U_0)\times \calP(e_0^\perp)\rightarrow\R$. In fact, all induction procedures will reduce to the finite range $0\le i\le n$. The relation to the filtration considered in \autoref{section:affineSmooth} is given by the following result.
		\begin{corollary}\label{corollary:EquivalenceLocalW_k}
			If $\mu\in \CV(\H^n)$ belongs to $W_k^{\loc}$, then for every relatively compact open and convex subset $\Omega\subset \Omega(e_0)$, there is a compact neighborhood $A_{\Omega}\subset U_0$ of the identity such that for every $\psi\in C^\infty_{A_\Omega}(U_0)$
			\begin{enumerate}
				\item $G_{e_0*}\mu_{\psi}\in \CV(\Omega)^{sm}$, and
				\item $G_{e_0*}\mu_{\psi}\in W_k(\Omega)$.
			\end{enumerate}
			Conversely, any $\mu$ satisfying this condition belongs to $W^{\loc}_k$. 
		\end{corollary}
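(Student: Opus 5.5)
The plan is to choose $A_\Omega$ so that every polytope obtained from $\calP(\Omega)$ by a small translation stays inside the domains of the local mollifications, and then to read off both properties from \autoref{corollary:localAffineSmoothness} and \autoref{lemma:TranslationMaps}.

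\textbf{Choice of $A_\Omega$.} Fix a relatively compact open convex $\Omega\subset\Omega(e_0)$ and a slightly larger relatively compact open convex $\Omega'$ with $\overline{\Omega}\subset\Omega'\subset\Omega(e_0)$. The remark preceding \autoref{corollary:localAffineSmoothness} provides a compact neighborhood $A'$ of the identity with $G_{e_0}^{-1}(\calP(\Omega'))\subset\calP_{A'}(\H^n)$. We want to translate by $\tau_x$ with $x\in\Omega$. Since $\overline\Omega$ is compact, the set $\{\tau_x:x\in\overline{\Omega}\}$ is a compact subset of $\PGL(n+1,\R)$. We therefore choose $A_\Omega\subset A'$ to be a compact neighborhood of the identity so small that $\tau_x A_\Omega\subset U_0$ for all $x\in\overline\Omega$. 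This is possible because $\tau_x\in U_0$ for $x\in\Omega(e_0)$, and $U_0$ is open, so a compactness argument applies.

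\textbf{First direction.} Property (1) is immediate from \autoref{corollary:localAffineSmoothness}. For (2), let $\psi\in C^\infty_{A_\Omega}(U_0)$, $x\in\Omega$ and $i<k$. Then $\psi^{\tau_x}(g)=\psi(\tau_x g)$ is supported in $\tau_x^{-1}A_\Omega$. The notation in \autoref{lemma:TranslationMaps} needs this support to be a compact subset of $U_0$, so we adjust the choice above: we require $\tau_x^{-1}A_\Omega\subset U_0$ for $x\in\overline\Omega$, using compactness in the same way. With this, \autoref{lemma:TranslationMaps} gives
\begin{align*}
D^i_x(G_{e_0*}\mu_\psi)=\Lambda^\mu_i(\psi^{\tau_x})=0,
\end{align*}
because $\mu\in W_k^{\loc}$. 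Hence $G_{e_0*}\mu_\psi\in W_k(\Omega)$.

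\textbf{Converse.} Assume the condition holds, and let $\psi\in C^\infty_c(U_0)$ be arbitrary. The support of $\psi$ need not lie in $A_\Omega$, so we use the $\mathcal{S}$-action. Take $g_t$ with $t$ small and $h\in\SO^+(1,n)$. By \autoref{lemma:propertiesLocalSmoothing} (2), the functional $\Lambda^\mu_i(\psi)[P]$ is computed from $\mu_\psi(g_s G^{-1}_{e_0}(sP))$ with $s$ small, which only involves polytopes near $e_0$. The obstacle is that $\supp\psi$ may be large.

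We remove this obstacle with a partition of unity. Write $\psi=\sum_j\psi_j$ with each $\psi_j$ supported in a small set $b_jA$, where $A$ is a small compact neighborhood of the identity. Then $\Lambda^\mu_i$ is linear in $\psi$, since $\mu_\psi$ is linear in $\psi$ and the derivatives are taken at a fixed point. Each $\psi_j$ has the form $\phi^{b_j^{-1}}$, up to the right $\SO^+(1,n)$-action of \autoref{lemma:propertiesLocalSmoothing} (1) and left $\mathcal{S}$-translations. By \autoref{lemma:partialActionHomSpace} (2) we may write $b_j=\sigma_jh_j^{-1}$. The $h_j$ are absorbed via (1), although this uses invariance of $\mu$, which the converse might not assume. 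Relating $\Lambda^\mu_i(\phi^{\sigma})$ to derivatives of $G_{e_0*}\mu_\phi$ is where we use the $\mathcal{S}$-equivariance. Elements of $\mathcal{A}$ act linearly in the chart, and elements of $\mathcal{N}$ act projectively fixing $0$ to first order. So $\Lambda^\mu_i(\phi^{\sigma})$ is a combination of derivatives $D^j_0$ with $j\le i$ of $G_{e_0*}\mu_\phi$ at $0$, and these vanish.

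We expect this step, making the reduction from arbitrary supports to small ones precise, to be the main difficulty. If invariance is not available, we would instead read the converse in the restricted sense, namely for $\psi$ with small support, where it follows directly from \autoref{lemma:TranslationMaps} at $x=0$.
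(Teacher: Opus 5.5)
Your forward direction is the paper's argument. The paper chooses $A_\Omega$ with $G_{e_0}^{-1}(\calP(\Omega))\subset\calP_{A_\Omega}(\H^n)$ and $\psi^{\tau_x}\in C^\infty_c(U_0)$ for all $x\in\Omega$, and your corrected condition $\tau_{-x}A_\Omega\subset U_0$ is exactly this. Then (1) follows from \autoref{corollary:localAffineSmoothness} and (2) from \autoref{lemma:TranslationMaps}. The auxiliary $\Omega'$ is unnecessary but harmless.

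\textbf{The converse is not established.} Absorbing the factors $h_j$ through \autoref{lemma:propertiesLocalSmoothing}~(1) requires $\SO^+(1,n)$-invariance of $\mu$, which is not a hypothesis of the converse. Without it, $h_j$ genuinely matters: $\Lambda^\mu_i(\psi_j)$ is governed by $\mu_{\phi_j}$ near $b_j^{-1}(e_0)=h_j(e_0)$, not near $e_0$. The $\mathcal{S}$-part alone could be handled by induction on $i$, using \autoref{lemma:equivPropertiesLambda}~(2) and \autoref{proposition:InvarianceDerivativesFractionalLinearPart}. But your claim that $\Lambda^\mu_i(\phi^\sigma)$ is ``a combination of the $D^j_0$'' is an unjustified chain rule for valuations; see the remark following that proposition. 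Falling back to $\psi$ with small support proves a weaker statement than the one claimed. For comparison, the paper settles the converse in one sentence: the derivative defining $\Lambda^\mu_i(\psi)$ only involves polytopes near $0$. Taken literally, that also only covers $\supp\psi\subset A_\Omega$. So your concern about large supports is legitimate, but your remedy does not resolve it.

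\textbf{The missing idea.} You never need to transport the base point back to $e_0$. The hypothesis holds for \emph{every} relatively compact convex $\Omega$, and it gives vanishing of $D^l_y$ at every $y\in\Omega$, not only at $0$. The steps are as follows.
\begin{enumerate}
\item Given $\psi$, let $\Omega$ be a ball of radius $<1$ containing the compact set $\{G_{e_0}(b^{-1}(e_0)):b\in\supp\psi\}$.
\item Cover $\supp\psi$ by finitely many sets $b_j\,\mathrm{int}A_\Omega$ and split $\psi=\sum_j\psi_j$ with $\psi_j=\phi_j^{b_j^{-1}}$ and $\phi_j\in C^\infty_{A_\Omega}(U_0)$.
\item By \autoref{lemma:propertiesLocalSmoothing}~(2), $\Lambda^\mu_i(\psi_j)[P]=\frac{d^i}{dt^i}\big|_0 G_{e_0*}\mu_{\phi_j}(F_j(tP))$. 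Here $F_j$ is the fractional linear map induced by $b_j^{-1}$ in the chart, and $F_j(0)=y_j\in\Omega$.
\item As in the proof of \autoref{proposition:InvarianceDerivativesFractionalLinearPart}, $F_j$ acts affinely on simplices: $F_j(t\Delta)=y_j+tB_t(\Delta)$. Here $B_t$ is the affine map sending each vertex $p$ to $\int_0^1 dF_j(stp)p\,ds$; it is smooth in $t$ and $B_0=dF_j(0)$ is invertible.
\item By \autoref{prop:localAffineSoothness}, $f(t,u):=G_{e_0*}\mu_{\phi_j}(y_j+tB_u\Delta)$ is smooth. Since $G_{e_0*}\mu_{\phi_j}\in W_k(\Omega)$ and $y_j\in\Omega$, we get $\partial_t^l f(0,u)=D^l_{y_j}(G_{e_0*}\mu_{\phi_j})[B_u\Delta]=0$ for $l<k$. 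Hence $\frac{d^i}{dt^i}\big|_0 f(t,t)=\sum_{l}\binom{i}{l}\partial_u^{i-l}\partial_t^l f(0,0)=0$ for $i<k$.
\item $\Lambda^\mu_i(\psi_j)$ is a valuation on $\calP(e_0^\perp)$, so triangulation and inclusion--exclusion give $\Lambda^\mu_i(\psi_j)=0$. Summing over $j$ yields $\Lambda^\mu_i(\psi)=0$, with no invariance assumption.
\end{enumerate}
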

		\begin{proof}
			Note first that if $\mu$ satisfies this condition, then $\Lambda^\mu_i=0$ for $0\le i<k$, since we may calculate the derivative on polytopes contained in any open neighborhood of $0\in \Omega(e_0)$ on which $\mu_\psi$, $\psi\in C^\infty_c(U_0)$, is well defined. \\
			Now assume that $\Lambda^\mu_i=0$ for all $0\le i<k$. If $\Omega\subset \Omega(e_0)$ is a relatively compact open convex set, then we may choose a compact neighborhood $A\subset U_0$ of the identity with $G^{-1}_{e_0}(\calP(\Omega))\subset \calP_{A}(\H^n)$. Since $\Omega$ is relatively compact, we may shrink $A$ if necessary to obtain a compact neighborhood $A_\Omega$ of the identity in $U_0$ such that $\psi^{\tau_x}\in C^\infty_c(U_0)$ for all $x\in \Omega$ and $\psi\in C^\infty_{A_\Omega}(U_0)$. Note that this implies in particular that $G_{e_0*}\mu_{\psi}\in\CV(\Omega)^{sm}$ is well defined for $\psi\in C^\infty_{A_{\Omega}}(U_0)$, compare \autoref{corollary:localAffineSmoothness}.
			In order to check that $G_{e_0*}\mu_{\psi}$ belongs to $W_k(\Omega)$, we use \autoref{lemma:TranslationMaps} to obtain, for $0\le i\le k-1$ and every $x\in\Omega$,
			\begin{align*}
				D_x^i(G_{e_0*}\mu_\psi)=\Lambda^\mu_i(\psi^{\tau_x})=0,
			\end{align*}
			since $\mu$ belongs to $W^{\loc}_k$. Thus $G_{e_0*}\mu_{\psi}\in W_k(\Omega)$ for all $\psi\in C^\infty_{A_\Omega}(U_0)$.
		\end{proof} 
		\begin{remark}\label{remark:W1Loc}
			Note in particular that $\mu\in \CV(\H^n)$ belongs to $W^{\loc}_1$ if and only if $\mu$ vanishes on singletons.
		\end{remark}

		The following equivariance properties are easy to check using \autoref{lemma:propertiesLocalSmoothing}.
		\begin{lemma}\label{lemma:equivPropertiesLambda}
			Let $\mu\in\CV(\H^n)$, $\psi\in C^\infty_c(U_0)$.
			\begin{enumerate}
				\item If $\mu$ is $\SO^+(1,n)$-invariant, then $\Lambda^\mu_k(R_h\psi)=\Lambda^\mu_k(\psi)$ for all $h\in\SO^+(1,n)$.
				\item $\Lambda^\mu_k(\psi^{a_L})(P)=\Lambda^\mu_k(\psi)[LP]$ for all $L\in\GL(e_0^\perp)$.
				\item If $\psi$ is $\Ad(K)$-invariant, then $\Lambda^\mu_k(\psi)$ is $\SO(e_0^\perp)$-invariant.
			\end{enumerate}
		\end{lemma}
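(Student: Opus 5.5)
The plan is to reduce (3) to parts (1) and (2) of the present lemma. The bridge is a simple identity for $\Ad(K)$-invariant test functions. For $k\in K$ we view $k$ as the element $a_k\in\mathcal{A}\subset\mathcal{S}$, where $k\in\SO(e_0^\perp)$. We then compare the two functions $\psi^{a_k}$ and $R_k\psi$ on $U_0$.

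\textbf{Step 1 (proving (1) and (2)).} Both follow from \autoref{lemma:propertiesLocalSmoothing} by differentiating under the defining formula for $\Lambda^\mu_k$. For (1), we have $\mu_{R_h\psi}=\mu_\psi$ on the common domain, so the $t$-derivatives at $0$ of $\mu_{R_h\psi}(g_tG_{e_0}^{-1}(sP))$ and $\mu_\psi(g_tG_{e_0}^{-1}(sP))$ coincide; we choose $s$ small enough for both supports. For (2), \autoref{lemma:propertiesLocalSmoothing}(2) gives
\begin{align*}
\mu_{\psi^{a_L}}\bigl(g_tG_{e_0}^{-1}(sP)\bigr)=\mu_\psi\bigl(a_Lg_tG_{e_0}^{-1}(sP)\bigr).
\end{align*}
Since $a_L$ commutes with $g_t$, and since \autoref{corollary:equivariancePartialActions} (with $x=0$) gives $a_LG_{e_0}^{-1}(sP)=G_{e_0}^{-1}(sLP)$, differentiating $k$ times at $t=0$ yields $\Lambda^\mu_k(\psi)[LP]$.

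\textbf{Step 2 (the key identity).} Assume $\psi(kgk^{-1})=\psi(g)$ for all $k\in K$ and $g\in U_0$. Then for $g\in U_0$,
\begin{align*}
\psi^{a_k}(g)=\psi(kg)=\psi\bigl(k(gk)k^{-1}\bigr)=\psi(gk)=R_k\psi(g).
\end{align*}
Here $k$ is regarded simultaneously as an element of $\mathcal{S}$ and of $\SO^+(1,n)$, which is possible by \autoref{lemma:partialActionHomSpace}(1). In particular, $\psi^{a_k}$ lies in $C^\infty_c(U_0)$, since right multiplication by $k$ preserves $U_0$. So (2) is applicable without any domain issue.

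\textbf{Step 3 (combining).} For $k\in\SO(e_0^\perp)=K$ and $P\in\calP(e_0^\perp)$, we obtain
\begin{align*}
\Lambda^\mu_k(\psi)[kP]=\Lambda^\mu_k(\psi^{a_k})[P]=\Lambda^\mu_k(R_k\psi)[P]=\Lambda^\mu_k(\psi)[P].
\end{align*}
The first equality is (2), the second is Step 2, and the third is (1) applied with $h=k\in K\subset\SO^+(1,n)$. This is exactly the $\SO(e_0^\perp)$-invariance of $\Lambda^\mu_k(\psi)$.

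\textbf{Main obstacle.} The only delicate point is the last equality. Step 2 only converts the left $\mathcal{A}$-action on $\psi$ into the right action $R_k$. To absorb $R_k$, we need (1), that is, the invariance of $\mu$ under the rotations $K$ of the isometry group. This is the standing situation in which (3) is used, since (3) is the $K$-part of (1) transported through (2). Accordingly, I would state the argument as the chain (2), Step 2, (1). Everything else is bookkeeping of supports: choose $s>0$ small enough that $G_{e_0}^{-1}(sP)$ and $G_{e_0}^{-1}(skP)$ both lie in $\calP_{\supp\psi}(\H^n)$. This works because $\supp R_k\psi=(\supp\psi)k^{-1}$ is still compact in $U_0$.
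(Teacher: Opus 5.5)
Your proof is correct and is the routine verification the paper has in mind when it says these properties are easy to check from \autoref{lemma:propertiesLocalSmoothing}: (1) and (2) come from parts (1) and (2) of that lemma together with \autoref{corollary:equivariancePartialActions} and $a_Lg_t=g_ta_L$, and (3) follows by rewriting $\psi^{a_h}=R_h\psi$ for $h\in K$ via $\Ad(K)$-invariance and then chaining (2) and (1). You are also right that (3) silently requires the $K$-invariance of $\mu$. Without it your argument only gives $\Lambda^\mu_k(\psi)[hP]=\Lambda^{\mu\circ h}_k(\psi)[P]$ with $(\mu\circ h)(Q)=\mu(hQ)$, and e.g.\ a direction-weighted perimeter pulled back from the Beltrami--Klein chart violates (3); this is harmless, since the paper only applies (3) to $\SO^+(1,n)$-invariant $\mu$.
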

		
		We next consider the subgroup $\mathcal{N}$ of the stabilizer $\mathcal{S}$, which acts as fractional linear maps in the chart corresponding to the gnomonic projection $G_{e_0}$.
		\begin{proposition}\label{proposition:InvarianceDerivativesFractionalLinearPart}
			Let $\mu\in \CV(\H^n)$ belong to $W^{\loc}_k$. Then
			\begin{align*}
				\Lambda^\mu_k(\psi^{n_x})=\Lambda^\mu_k(\psi)
			\end{align*}
			for every $x\in e_0^\perp$ and all $\psi\in C^\infty_c(U_0)$.
		\end{proposition}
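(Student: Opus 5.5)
The plan is to work in the Beltrami--Klein chart $G_{e_0}$ and to observe that $n_x$ acts there as a fractional linear map which fixes the origin and whose differential at $0$ is the identity. Concretely, $n_x$ sends the ray through $e_0+y$, $y\in e_0^\perp$, to the ray through $(1+\langle x,y\rangle)e_0+y$. So in the chart it acts by
\begin{align*}
	F_x(y)=\frac{y}{1+\langle x,y\rangle},
\end{align*}
defined near $0$, and it maps polytopes to polytopes because it is projective. The key identity is the scaling relation $F_x(ty)=tF_{tx}(y)$, equivalently $n_xg_t=g_tn_{tx}$ (from $a_Ln_xa_L^{-1}=n_{L^{-T}x}$).

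Fix $\psi$ with $\psi^{n_x}\in C^\infty_c(U_0)$ and $P\in\calP(e_0^\perp)$. After replacing $P$ by $sP$ for small $s>0$, as allowed in the definition of $\Lambda^\mu_k$, we may assume that all polytopes below lie in the domain of $\mu_\psi$. By \autoref{lemma:propertiesLocalSmoothing}~(2),
\begin{align*}
	\Lambda^\mu_k(\psi^{n_x})[P]=\frac{d^k}{dt^k}\Big|_0\mu_\psi\left(n_xg_tG_{e_0}^{-1}(P)\right)=\frac{d^k}{dt^k}\Big|_0\mu_\psi\left(g_tn_{tx}G_{e_0}^{-1}(P)\right).
\end{align*}
Now introduce the two-variable function
\begin{align*}
	\Phi(t,r):=\mu_\psi\left(g_tn_{rx}G_{e_0}^{-1}(P)\right)=G_{e_0*}\mu_\psi\big(tF_{rx}(P)\big),
\end{align*}
defined for $(t,r)$ near $(0,0)$. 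It is smooth by \autoref{lemma:propertiesLocalSmoothing}~(4), since $(t,r)\mapsto g_tn_{rx}$ is a smooth curve in $\PGL(n+1,\R)$. We must show $\partial_t^k|_0\Phi(t,t)=\partial_t^k|_0\Phi(t,0)$, and the right-hand side is $\Lambda^\mu_k(\psi)[P]$.

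The vanishing hypothesis enters here. For each fixed $r$, the polytope $Q_r:=F_{rx}(P)$ satisfies
\begin{align*}
	\partial_t^i\big|_{t=0}\Phi(t,r)=\Lambda^\mu_i(\psi)[Q_r]=0\quad\text{for } 0\le i<k,
\end{align*}
because $\mu\in W^{\loc}_k$. This again uses rescaling, valid by the chain-rule remark after the definition of $\Lambda^\mu_k$. By Taylor's formula with integral remainder in $t$, applied uniformly in $r$, we get $\Phi(t,r)=t^kR(t,r)$ with $R$ smooth and $R(0,r)=\frac{1}{k!}\partial_t^k|_0\Phi(t,r)$. Hence
\begin{align*}
	\frac{d^k}{dt^k}\Big|_0\big(t^kR(t,t)\big)=k!\,R(0,0)=\frac{d^k}{dt^k}\Big|_0\big(t^kR(t,0)\big),
\end{align*}
because every term in which a derivative falls on $R$ still carries a positive power of $t$. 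This proves the identity for all $P$, i.e.\ $\Lambda^\mu_k(\psi^{n_x})=\Lambda^\mu_k(\psi)$.

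The main obstacle is the bookkeeping of domains. One must make sure that for a fixed $\psi$ there is a single neighbourhood of $(0,0)$ in the $(t,r)$-plane on which every $g_tn_{rx}G_{e_0}^{-1}(sP)$ lies in $\calP_{\supp\psi}(\H^n)$. The same neighbourhood must also make $\Phi$ jointly smooth, which is what justifies the uniform Taylor expansion. Both should follow from compactness of $\supp\psi$ and the openness of $\mathcal{D}$ after choosing $s$ small. One should also check the formula for $F_x$, including the sign convention relating $x^T$ in $n_x$ to $\langle x,\cdot\rangle$, although only the scaling relation $n_xg_t=g_tn_{tx}$ is actually needed, and that follows abstractly from the given conjugation rule.
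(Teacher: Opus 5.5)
Your reduction to the two-variable function $\Phi(t,r)$ and your Taylor argument are the right mechanism; they amount to the same computation as the chain-rule step in the paper. The gap is the joint smoothness of $\Phi$ near $(0,0)$, on which the uniform Taylor expansion rests. It is not justified, and it is the real difficulty rather than a matter of domains. \autoref{lemma:propertiesLocalSmoothing}~(4) concerns $g\mapsto\mu_\psi(gP)$ on an open subset of $\PGL(n+1,\R)$. But $g_t$ is invertible only for $t\neq0$: at $t=0$ the matrix degenerates to $\mathrm{diag}(1,0,\dots,0)$ and collapses every polytope to $\{e_0\}$. So that lemma only gives smoothness of $\Phi$ on $\{t>0\}$.

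Regularity up to $t=0$ is exactly the nontrivial content of \autoref{prop:localAffineSoothness}. That result only covers families $L[tQ+y]$ obtained from a \emph{fixed} polytope $Q$ by affine maps. Applied to $Q_r=F_{rx}(P)$ for each fixed $r$, it gives smoothness in $t$, but nothing uniform in $r$. The reason is that $r\mapsto F_{rx}(P)$ is a projective deformation, which for a general polytope is not of the form $A_rP$ with $A_r$ affine. For example, a suitably placed square is mapped to a trapezoid that is not a parallelogram. Without such uniformity you cannot conclude $R(t,t)\to R(0,0)$, i.e.\ that $t^{-k}\bigl(G_{e_0*}\mu_\psi(F_x(tP))-G_{e_0*}\mu_\psi(tP)\bigr)\to0$. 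Nor can you fall back on continuity in the polytope: $\partial_t^k|_{t=0}\Phi(t,r)=\Lambda^\mu_k(\psi)[F_{rx}(P)]$, and $\Lambda^\mu_k(\psi)$ is in general only a measurable valuation.

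The missing idea, which is how the paper proceeds, is to reduce to simplices with a vertex at the origin, on which the fractional linear map acts linearly. Both $\Lambda^\mu_k(\psi^{n_x})$ and $\Lambda^\mu_k(\psi)$ are translation invariant, $k$-homogeneous valuations on $\calP(e_0^\perp)$. So it suffices to treat $\Delta=\mathrm{conv}(0,p_1,\dots,p_n)$, rescaled into the domain. Since $F_{rx}$ fixes $0$ and preserves rays through $0$, we have $F_{rx}(\Delta)=A_r\Delta$, where $A_r$ is the linear map with $A_rp_i=p_i/(1+r\langle x,p_i\rangle)$. This map depends smoothly on $r$ and satisfies $A_0=\Id$. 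Then $\Phi(t,r)=G_{e_0*}\mu_\psi(tA_r\Delta)$ is jointly smooth on $[0,\epsilon)\times(-\epsilon,\epsilon)$ by \autoref{prop:localAffineSoothness}, and your Taylor argument goes through verbatim. Lower-dimensional simplices are handled the same way after extending their nonzero vertices to a basis. Arbitrary polytopes then follow by triangulating and using inclusion--exclusion.
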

		\begin{proof}
			Let us first show the claim for full dimensional simplices $\Delta\in \calP(e_0^\perp)$. Since $\Lambda^\mu_k(\psi)$ is a translation invariant $k$-homogeneous valuation by \autoref{prop:localAffineSoothness}, we may without loss of generality assume that $\Delta\subset \Omega(e_0)$ has a vertex at the origin (i.e. after rescaling and translating $\Delta$). Let $p_0,\dots,p_n$ denote the vertices of $\Delta$, where $p_0=0$. Note that
			\begin{align*}
				n_xg_t=\begin{pmatrix}
					1 & x^T\\
					0 & \Id 
				\end{pmatrix}\begin{pmatrix}
				1 & 0\\
				0 & t\Id 
				\end{pmatrix}=\begin{pmatrix}
				1 & tx^T\\
				0 & t\Id 
				\end{pmatrix}=\begin{pmatrix}
				1 & 0\\
				0 & t\Id 
				\end{pmatrix}\begin{pmatrix}
				1 & tx^T\\
				0 & \Id 
				\end{pmatrix}=g_t n_{tx},
			\end{align*}
			and that the vertices $p_i(t)$ of the simplex $G_{e_0}(n_{tx}G_{e_0}^{-1}(\Delta))$ are given by
			\begin{align*}
				p_i(t)=\frac{p_i}{1+t\langle x,p_i\rangle}.
			\end{align*}
			In particular, this simplex always has the vertex $p_0(t)$ at the origin. In other words,
			\begin{align*}
				G_{e_0}(n_{tx}G_{e_0}^{-1}(\Delta))=A_t(\Delta)
			\end{align*}
			for the smooth curve $t\mapsto A_t\in\GL(e_0^\perp)$ defined by $A_t(p_i)=p_i(t)$ for $1\le i\le n$. Note that $A_0= \Id _{e_0^\perp}$. Since $\mu$ belongs to $W^{\loc}_k$, we have
			\begin{align*}
				\Lambda^\mu_k(\psi^{n_x})[\Delta]=\frac{d^k}{dt^k}\Big|_0 \mu_{\psi^{n_x}}(g_t G_{e_0}^{-1}(\Delta))=\frac{d^k}{dt^k}\Big|_0 \mu_{\psi}(n_xg_t G_{e_0}^{-1}(\Delta))
			\end{align*}
			due to \autoref{lemma:propertiesLocalSmoothing}. Our previous discussion thus implies
			\begin{align*}
					\Lambda^\mu_k(\psi^{n_x})[\Delta]=\frac{d^k}{dt^k}\Big|_0 G_{e_0*}\mu_{\psi}(t (A_t \Delta)).
			\end{align*}
			Since $(t,A)\mapsto \mu_{\psi}(t (A \Delta))$ defines a smooth function on a neighborhood of $(0,\Id_{e_0^\perp})\in [0,\infty)\times \GL(e_0^\perp)$ by \autoref{prop:localAffineSoothness}, we may repeatedly apply the chain rule to obtain
			\begin{align*}
				\Lambda^\mu_k(\psi^{n_x})[\Delta]=&\sum_{j=0}^k\binom{k}{j}\frac{\partial^{k-j}}{\partial s^{k-j}}\Big|_0\frac{\partial^j}{\partial t^j}\Big|_0 G_{e_0*}\mu_{\psi}(t (A_s \Delta))\\
				=&\frac{d^k}{dt^k}\Big|_0G_{e_0*}\mu_{\psi}(t (A_0 \Delta))=\Lambda^\mu_k(\psi)[\Delta],
			\end{align*}
			where we used that $\mu$ belongs to $W^{\loc}_k$, so all lower order partial derivatives with respect to $t$ vanish in $t=0$. This shows the claim for full dimensional simplices. If $\Delta$ is a lower dimensional simplex, then it is a face of a full dimensional simplex, and, after translating and rescaling, we may use this full dimensional simplex to construct a smooth curve $t\mapsto A_t$ in $\GL(e_0^\perp)$ similar to the previous case. In particular, we can repeat the argument for the lower dimensional simplex $\Delta$ using this map. Thus $\Lambda^\mu_k(\psi^{n_x})[\Delta]=\Lambda^\mu_k(\psi)[\Delta]$ for all simplices $\Delta\in \calP(e_0^\perp)$. Since both sides define valuations on $\calP(e_0^\perp)$, we may triangulate any given polytope and use the Inclusion-Exclusion Principle (compare \cite{SchneiderConvexbodiesBrunn2014}*{Theorem~6.2.3}) to see that both sides coincide on arbitrary polytopes in $\calP(e_0^\perp)$. The claim follows.
		\end{proof}
		\begin{remark}
			This argument is a rather elegant work-around for the lack of a proper chain rule (compare the discussion in \cite{KnoerrIsometryinvariantvaluations2026}*{Remark~4.5}). The key observation is that the fractional linear map induced by $n_x$ in the chart given by the gnomonic projection acts as an affine map on any given simplex. For valuations belonging to $W^{\loc}_k$, the derivatives of the corresponding curve $t\mapsto A_t$ in the affine group do not contribute to the final result, so we obtain the expected chain rule for arbitrary polytopes from the valuation property. In particular, the same argument can be used in the spherical setting of \cite{KnoerrIsometryinvariantvaluations2026} instead of the more complicated approach through Taylor estimates.
		\end{remark}
		
		We end the discussion of the general properties of the maps $\Lambda^\mu_k$ with the following continuity result.
		\begin{lemma}\label{lemma:LambdaContinuousFunctional}
		Let $\mu\in \CV(\H^n)$ belong to $W_k^{\loc}$ and let $A\subset U_0$ be a compact subset. Then for every $P\in\calP(e_0^\perp)$ the map
		\begin{align*}
			C^\infty_A(U_0)&\rightarrow\R\\
			\psi&\mapsto \Lambda^\mu_k(\psi)[P] 
		\end{align*}
		is linear and continuous in the natural Fr\'echet topology on $C^\infty_A(U_0)$.
	\end{lemma}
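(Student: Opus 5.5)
Linearity is immediate: $\psi\mapsto\mu_\psi(Q)$ is linear for each fixed polytope $Q$, and $\Lambda^\mu_k(\psi)[P]$ is a $k$-th derivative in $t$ of such an expression, so it is linear in $\psi$. The work is in continuity. Fix $P\in\calP(e_0^\perp)$ and choose $s>0$ such that $G_{e_0}^{-1}(sP)\in\calP_A(\H^n)$. This is possible uniformly for all $\psi\in C^\infty_A(U_0)$, because $\calP_A(\H^n)$ contains all polytopes in a small neighbourhood of $e_0$, as discussed before \autoref{corollary:localAffineSmoothness}. Set $Q:=G_{e_0}^{-1}(sP)$. For $t$ in a small interval $[0,\epsilon)$ we then have $g_tQ\in\calP_A(\H^n)$, and
\begin{align*}
	\Lambda^\mu_k(\psi)[P]=s^{-k}\frac{d^k}{dt^k}\Big|_0\mu_\psi(g_tQ).
\end{align*}

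The strategy is to move the derivative onto $\psi$. By \autoref{lemma:propertiesLocalSmoothing}~(2), or directly by left invariance of the Haar measure (substitute $g\mapsto g_t g$), we get
\begin{align*}
	\mu_\psi(g_tQ)=\int_{\PGL(n+1,\R)}\psi(g)\,\mu(g^{-1}g_tQ)\,d\lambda=\int_{\PGL(n+1,\R)}\psi(g_tg)\,\mu(g^{-1}Q)\,d\lambda.
\end{align*}
In the last integral, $\mu(g^{-1}Q)$ is only evaluated for $g$ with $g_tg\in\supp\psi\subset A$, that is, for $g\in g_t^{-1}A$. For small $t\ge0$ these sets stay inside a fixed compact set $A'\subset U_0$ with $Q\in\calP_{A'}(\H^n)$; this may require shrinking $s$ further, which is harmless. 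On $A'$ the function $g\mapsto\mu(g^{-1}Q)$ is continuous, hence bounded.

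Next I would differentiate under the integral sign. Let $X\in\mathfrak{pgl}(n+1,\R)$ be the generator of the one-parameter family $g_t$ reparametrized as $t=e^{r}$; more simply, $t\mapsto g_t$ is a smooth curve in $\mathcal{A}$ for $t>0$ extending smoothly to $t=0$ in $\GL(n+1,\R)$. The $t$-derivatives of $\psi(g_tg)$ at $t=0$ are then given by finitely many derivatives of $\psi$ at the point $g_0g$ in matrix coordinates. I expect this to be the main obstacle: $g_0=\mathrm{diag}(1,0,\dots,0)$ is not invertible, so $g_tg$ leaves $\PGL(n+1,\R)$ as $t\to0$. At $t=0$ the integrand $\psi(g_0g)$ is therefore not literally defined, even though $\mu_\psi(g_tQ)$ itself is smooth up to $t=0$ by \autoref{prop:localAffineSoothness}.

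To avoid the degeneration I would use the $\mathcal{A}$-equivariance instead. By \autoref{lemma:equivPropertiesLambda}~(2) and homogeneity we may trade the scaling for a change of $P$, and then write the derivative at $t=0$ via the Taylor expansion of the smooth function $f_\psi(t):=\mu_\psi(g_tQ)$ on $[0,\epsilon)$. Since $\mu\in W^{\loc}_k$, the lower derivatives vanish, so $f_\psi(t)=t^k\Lambda^\mu_k(\psi)[sP]/k!+o(t^k)$. One has to control the remainder uniformly in $\psi$. Concretely, I would express $\Lambda^\mu_k(\psi)[P]$ as a limit of finite difference quotients
\begin{align*}
	\Lambda^\mu_k(\psi)[sP]=\lim_{t\to0^+}k!\,t^{-k}f_\psi(t),
\end{align*}
where each $f_\psi(t)$ is a continuous linear functional on $C^\infty_A(U_0)$ (indeed continuous already in the sup norm, by the boundedness of $\mu(g^{-1}Q)$ on $A'$). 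This exhibits $\psi\mapsto\Lambda^\mu_k(\psi)[P]$ as a pointwise limit of continuous linear functionals on the Fréchet space $C^\infty_A(U_0)$. By the Banach--Steinhaus theorem for Fréchet spaces, a pointwise limit of a sequence of continuous linear functionals is continuous. Applying this along a sequence $t_j\to0^+$ proves the claim, and this sidesteps any explicit differentiation under the integral at the degenerate point $t=0$.
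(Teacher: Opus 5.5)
Your final argument is the paper's proof. Since $\mu\in W^{\loc}_k$ kills the lower derivatives, $\Lambda^\mu_k(\psi)[P]$ is the limit of $k!\,s^{-k}t_j^{-k}\mu_\psi(g_{t_j}Q)$ along $t_j\to0^+$. Each term is a sup-norm continuous linear functional of $\psi$, so Banach--Steinhaus on the Fr\'echet space $C^\infty_A(U_0)$ gives continuity.

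One correction to the side remark you cite for that continuity. The sets $g_t^{-1}A$ do not stay in a fixed compact $A'$ as $t\to0^+$, because $g_t^{-1}$ diverges in $\PGL(n+1,\R)$, and shrinking $s$ does not change this. You only need a bound for each fixed $t$, for instance $|\mu_\psi(g_tQ)|\le\lambda_{\PGL(n+1,\R)}(A)\sup_{g\in A}|\mu(g^{-1}g_tQ)|\,\|\psi\|_\infty$, which is exactly the estimate the paper uses.
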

	\begin{proof}
		Fix $s>0$ with $g_tG_{e_0}^{-1}(sP)\in \calP_A(\H^n)$ for all $t\in (0,\epsilon)$ for some $\epsilon>0$. Since $\mu$ belongs to $W_k^{\loc}$, we have for $\psi\in C^\infty_A(U_0)$,
		\begin{align*}
			\Lambda^\mu_k(\psi)[P]=s^k\frac{d^k}{dt^k}\Big|_0\mu_{\psi}(g_tG_{e_0}^{-1}(sP))=k!s^k\lim_{m\rightarrow\infty} m^k\mu_{\psi}(g_{1/m}G^{-1}_{e_0}(sP))
		\end{align*}
		as all lower order derivatives vanish. Note that there is $M\in\mathbb{N}$ such that $g_{1/m}G^{-1}_{e_0}(sP)\in \calP_A(\H^n)$ for all $m\ge M$. Thus, for each fixed $m\ge M$, we may estimate 
		\begin{align*}
			|\mu_{\psi}(g_{1/m}G^{-1}_{e_0}(sP))|\le \lambda_{\PGL(n+1,\R)}(A)\sup_{g\in A}|\mu(g^{-1}g_{1/m} G_{e_0}^{-1}(sP))|\cdot \|\psi\|_\infty,
		\end{align*} 
		which is bounded for each $m\ge M$ due to the continuity of $\mu$ and the fact that $A$ is compact. Thus the linear functional $\psi\mapsto \Lambda^\mu_k(\psi)[P]$ is the pointwise limit of a sequence of continuous and linear functionals on the Fr\'echet space $C^\infty_A(U_0)$. The Banach--Steinhaus Theorem therefore implies that this map is also continuous.
	\end{proof}
	\begin{remark}\label{remark:orderDistribution}
		The Banach--Steinhaus Theorem is not really required for the result (although it simplifies the argument): A more careful look at the proof of \cite{KnoerrIsometryinvariantvaluations2026}*{Theorem 3.1} shows that $\Lambda^\mu_k(\psi)[P]$ is obtained by decomposing $P$ into simplices and differentiating the action of the affine group acting on the valuation on suitable polytopes derived from these simplices. In particular, we may use the invariance of the Haar measure to transfer these derivatives to the functions $\psi$. Since we are differentiating $k$-times, one easily checks that there exists a constant $C(A,P,\mu,k)>0$ such that for all $\psi\in C^\infty_A(U_0)$,
		\begin{align*}
			|\Lambda^\mu_k(\psi)[P]|\le C(A,P,\mu,k) \|\psi\|_{C^k}.
		\end{align*}
		In particular, for every $P\in \calP(e_0^\perp)$, we obtain a distribution of order at most $k$. Moreover, this shows that $\psi\mapsto \Lambda^\mu_k(\psi)[P]$ is continuous independent of the vanishing properties of $\Lambda^\mu_i$, $0\le i<k$. Since we do not need these more refined properties, we leave the necessary arguments to the reader.
	\end{remark}
		
	\subsection{Vanishing properties of $\Lambda^\mu_k$, $0\le k\le n-1$, in the invariant case}

		We begin this section with the following technical lemma. Recall that we consider $U_0$ as a homogeneous space of $H=\mathcal{S}\times \SO^+(1,n)$ with action $(\sigma,h)\cdot g=\sigma g h^{-1}$, $(\sigma,h)\in H$, $g\in U_0$. By \autoref{lemma:partialActionHomSpace}, the stabilizer of $e\in U_0$ in $H$ is then given by $K_\Delta=\{(k,k):k\in K\}$. Let $\Ad(k)g:=kgk^{-1}$ denote the adjoint action of $K$ on $U_0$. This corresponds precisely to the action of $K_\Delta$ on $U_0$.
		\begin{lemma}\label{lemma:approximationMollifier}
			Let $A\subset U_0$ be compact. There exists a compact neighborhood $B$ of $A$ and a sequence of $\Ad(K)$-invariant functions $\chi_j\in C^\infty_c(U_0)$ with support shrinking to $\{\Id\}$ such that the following holds: For every $\psi\in C^\infty_A(U_0)$ there exists $\tilde{\psi}\in C^\infty_c(H)$ such that
			\begin{align*}
				\psi=\lim_{j\rightarrow\infty}\int_H\tilde{\psi}(g)[\pi(g)\chi_j ]d\lambda_H(g)
			\end{align*}
			converges in $C^\infty_{B}(U_0)$. Here, the integral is understood as the Riemann integral of a $C^\infty_B(U_0)$-valued function on $H$ with compact support.
		\end{lemma}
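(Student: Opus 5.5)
The idea is to lift $\psi$ to $H$ along the submersion $H\to U_0$, $(\sigma,h)\mapsto \sigma h^{-1}$, and then to use $\chi_j$ as an approximate identity for the induced convolution.

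\textbf{Choice of $\chi_j$.} Let $\rho_j\in C^\infty_c(U_0)$ be nonnegative, with supports shrinking to $\{\Id\}$. Average over the compact group $K$:
\begin{align*}
	\chi_j(g):=\int_K\rho_j(kgk^{-1})\,d\lambda_K(k).
\end{align*}
These functions are $\Ad(K)$-invariant and still have supports shrinking to $\{\Id\}$, since $\Id$ is fixed by conjugation and $K$ is compact. We normalize so that $\int_{U_0}\chi_j\,d\lambda_{\PGL(n+1,\R)}=1$.

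\textbf{Lifting $\psi$.} Since $H$ is a surjective submersion with fibres the $K_\Delta$-orbits, integration along the fibres defines a continuous linear map
\begin{align*}
	p_*:C^\infty_c(H)\to C^\infty_c(U_0),
\end{align*}
obtained by pushing forward $\tilde\psi\,\lambda_H$ and taking the density with respect to the restricted Haar measure of $\PGL(n+1,\R)$. This density is smooth because the restricted Haar measure is the image of $\lambda_H$ up to a constant, by \autoref{lemma:partialActionHomSpace}(4) and the homogeneity $U_0\cong H/K_\Delta$. The map $p_*$ is surjective. Given $\psi$, take a local section or a cutoff $\theta\in C^\infty_c(H)$ with $\int_{K_\Delta}\theta(xk)\,dk=1$ for $x$ in a compact neighbourhood of $H^{-1}(A)/K_\Delta$, and set $\tilde\psi:=\theta\cdot(\psi\circ H)$. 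Then $p_*\tilde\psi=\psi$, and $\supp\tilde\psi$ lies in a fixed compact set $C\subset H$ depending only on $A$.

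\textbf{Convolution identity.} Compute
\begin{align*}
	\int_H\tilde\psi(\sigma,h)\,\chi_j(\sigma^{-1}gh)\,d\lambda_H.
\end{align*}
Substituting and using the left/right invariance from \autoref{lemma:partialActionHomSpace}(4), this equals a convolution-type integral
\begin{align*}
	\int_{U_0}\Psi(g,u)\,\chi_j(u)\,du,
\end{align*}
where $\Psi$ is smooth and $\Psi(g,\Id)=p_*\tilde\psi(g)=\psi(g)$. Here the $\Ad(K)$-invariance of $\chi_j$ is what makes the expression depend only on the fibre integral $p_*$. Concretely, writing $u=\sigma^{-1}gh$ and integrating over the fibre $\{(\sigma,h):\sigma^{-1}gh=u\}$, the map $u\mapsto\Psi(g,u)$ is smooth jointly in $(g,u)$. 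This follows because the action map $H\times U_0\to U_0$ is a submersion in the group variable.

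Since $\chi_j$ is an approximate identity, $\int\Psi(g,u)\chi_j(u)\,du\to\Psi(g,\Id)=\psi(g)$, with all $g$-derivatives converging uniformly. Supports lie in $B:=C\cdot\supp\chi_1$, which is compact, and we may arrange that $B$ is a compact neighbourhood of $A$. This gives convergence in $C^\infty_B(U_0)$. The Riemann integral of the vector-valued function agrees with the pointwise integral, because $g\mapsto\pi(g)\chi_j$ is continuous into $C^\infty_B(U_0)$.

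\textbf{Main obstacle.} I expect the hardest step to be the bookkeeping in the convolution identity. One must verify that fibre integration against $K_\Delta$ commutes correctly with the two-sided action, and that the kernel $\Psi$ is genuinely smooth with $\Psi(g,\Id)=\psi(g)$. This uses both the $\Ad(K)$-invariance of $\chi_j$ and the normalization of Haar measures; getting the correct modular factors is where care is needed.
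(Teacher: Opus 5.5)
Your proposal is correct, but it reaches the lemma by a different route from the paper.

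\textbf{The paper's route.} The paper never analyses a kernel on $U_0$. It takes an approximate identity $\eta_j$ on the group $H$ and uses the standard fact that $\int_H\tilde\psi(g)\,L_g\eta_j\,d\lambda_H(g)$ converges in $C^\infty_c(H)$. It then applies the fibre integration $\tilde\pi_*$, which is continuous and $H$-equivariant ($\tilde\pi_*L_g=\pi(g)\tilde\pi_*$). The mollifiers are \emph{defined} as $\chi_j:=\tilde\pi_*\eta_j$, so convergence in $C^\infty_B(U_0)$ is inherited and nothing has to be computed.

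\textbf{Your route.} You fix $\chi_j$ directly on $U_0$, as conjugation averages of nonnegative approximate identities. You then rewrite $\int_H\tilde\psi(x)\chi_j(x^{-1}\cdot g)\,d\lambda_H(x)$ as $\int_{U_0}\Psi(g,u)\chi_j(u)\,du$, where $\Psi(g,\cdot)$ is the density of the pushforward of $\tilde\psi\,\lambda_H$ under $x\mapsto x^{-1}\cdot g$. This costs you two facts, both of which hold.
\begin{enumerate}
\item $\Psi$ is jointly smooth. This is true because $(x,g)\mapsto(x^{-1}\cdot g,g)$ is a submersion that is proper on $\supp\tilde\psi\times U_0$.
\item $\Psi(g,\Id)=p_*\tilde\psi(g)$. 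The modular factor you worried about is real: $H$ is not unimodular, since conjugation by $a_L$ acts on $\mathcal{N}$ by $L^{-T}$. However, at $u=\Id$ the relevant fibre is the coset $yK_\Delta$ with $\tilde\pi(y)=g$, and the modular function of $H$ is trivial on the compact group $K_\Delta$. So no factor appears, and your normalization against the restricted Haar measure of $\PGL(n+1,\R)$ is exactly right.
\end{enumerate}

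\textbf{On the role of $\Ad(K)$-invariance.} $\Psi$ does not involve $\chi_j$, so the convergence holds for any nonnegative normalized approximate identity on $U_0$. Your remark that $\Ad(K)$-invariance makes the expression depend only on $p_*\tilde\psi$ is true for each fixed $j$. But the invariance is only needed later, in \autoref{theorem:SimpleLocallyW_n}.

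\textbf{What each route buys.} Your route handles the invariance cleanly. In the paper, $\eta_j$ is made invariant under left and right multiplication by $K_\Delta$. That concentrates $\eta_j$ near $K_\Delta$ rather than at the identity, so the $H$-convolution actually tends to the right $K_\Delta$-average of $\tilde\psi$, not to $\tilde\psi$. This is harmless after applying $\tilde\pi_*$, and it can be avoided by averaging only under conjugation. Since bi-$K_\Delta$-invariant functions on $H$ are exactly the lifts of $\Ad(K)$-invariant functions on $U_0$, your computation of $\Psi(g,\Id)$ is precisely what makes that step rigorous. Conversely, the paper's route is shorter and needs no parametrized fibre-integration argument.
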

		\begin{proof}
			Let $\tilde{\pi}:H\rightarrow U_0$ be given by $\tilde{\pi}(\sigma,h)=\sigma\cdot h^{-1}$. Note that \autoref{lemma:partialActionHomSpace} shows that $\tilde{\pi}:H\rightarrow U_0$ is a fiber bundle with compact fibers isomorphic to $K_\Delta$. We denote the fiber integration along $\tilde{\pi}$ by $\tilde{\pi}_*$. Pick a function $\zeta\in C^\infty_c(H)$ such that $\tilde{\psi}:=(\psi\circ \tilde{\pi})\zeta\in C^\infty_c(H)$ satisfies $\tilde{\pi}_*\tilde{\psi}=\psi$, which is possible since the fibers are compact. Pick a smooth approximation of the identity $(\eta_j)_j$ in $C^\infty_c(H)$ with supports shrinking to the identity in $H$. By averaging $\eta_j$ over the action of $K_\Delta\times K_\Delta$ given by left and right multiplication (which is possible since $K_\Delta$ is compact), we may assume that $\eta_j$ is both left and right $K_\Delta$-invariant. Then 
			\begin{align*}
				\tilde{\psi}_j:=\int_H \tilde{\psi}(g) [L_g\eta_j]d\lambda_H(g)\in C^\infty_c(H)
			\end{align*}
			converges in $C^\infty_c(H)$ to $\tilde{\psi}$. Here, $L_g$ denotes the action of $g\in H$ on $C^\infty_c(H)$ induced by left multiplication. Note that $\tilde{\pi}:H\rightarrow U_0$ commutes with the action of $H$ on itself and $U_0$. In particular, for $g\in H$, we have $\tilde{\pi}_*(L_g F)=L_g\tilde{\pi}_*F$ for all $F\in C^\infty_c(H)$. This implies in particular that
			\begin{align*}
				\chi_j:=\tilde{\pi}_*\eta_j
			\end{align*}
			is left $K_\Delta$-invariant, which shows that it is $\Ad(K)$-invariant. Note that we may assume that the functions $\eta_j$ are supported on the same compact set in $H$. As the function $\tilde{\psi}$ has compact support, we thus find a compact set $\tilde{B}\subset U_0\times H$ such that $(h,g)\mapsto \tilde{\psi}(g) [L_g\eta_j](h)$ is supported on $\tilde{B}$ for every $j\in\mathbb{N}$. Since the fiber integration $\tilde{\pi}_*:C^\infty_c(H)\rightarrow C^\infty_c(U_0)$ is continuous, we thus obtain
			\begin{align*}
				\psi=&\tilde{\pi}_*(\tilde{\psi})=\lim_{j\rightarrow\infty}\tilde{\pi}_*	\tilde{\psi}_j=\lim_{j\rightarrow\infty}\int_H \tilde{\psi}(g) \tilde{\pi}_*(L_g\eta_j)d\lambda_H(g)\\
				=&\lim_{j\rightarrow\infty}\int_H \tilde{\psi}(g) L_g[\tilde{\pi}_*\eta_j]d\lambda_H(g)=\lim_{j\rightarrow\infty}\int_H \tilde{\psi}(g) [L_g\chi_j]d\lambda_H(g),
			\end{align*}
			where the functions in the integrand take values in $C^\infty_B(U_0)$ for $B=\tilde{\pi}(\tilde{B})$, i.e. the sequence and integral converge in $C^\infty_B(U_0)$. Now note that the action of $g\in H$ on $C^\infty_c(U_0)$ given by $L_g$ is precisely $\pi(g)$.
		\end{proof}
		
		The previous result will allow us to obtain the relevant vanishing properties for $\Lambda^\mu_k$ from its vanishing properties on $\Ad(K)$-invariant functions. This result is in turn based on the main results of \cite{KnoerrRigidmotioninvariant2026}.		
		\begin{proposition}[\cite{KnoerrRigidmotioninvariant2026}*{Proposition 5.4}]
			\label{proposition:FlatCasesimpleValuationsVanish}
			Let $0\le k\le n-1$ and $\varphi:\calP(\R^n)\rightarrow\R$ be a measurable, translation and $\SO(n)$-invariant, as well as simple valuation that is homogeneous of degree $k$. Then $\varphi=0$.
		\end{proposition}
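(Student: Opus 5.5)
The plan is to argue by induction on the dimension $n$, using simplicity to reduce first to prisms and then to a scaling argument on simplices. Measurability will enter only through Cauchy's functional equation.

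\textbf{Step 1: base case $n=1$.} Here $k=0$. Since $\varphi$ is simple and translation invariant, $t\mapsto\varphi([0,t])$ is additive on $(0,\infty)$: cut $[0,s+t]$ at $s$, and note the intersection is a point, on which $\varphi$ vanishes. A measurable additive function is linear, so $\varphi([0,t])=ct$. Homogeneity of degree $0$ then forces $c=0$, hence $\varphi=0$.

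\textbf{Step 2: prisms vanish, assuming the result in dimension $n-1$.} Fix a unit vector $v$ and a polytope $Q\subset v^\perp$. Cutting a prism by a translate of its base, and using simplicity and translation invariance, shows that $t\mapsto \varphi(Q+[0,tv])$ is additive. Measurability of $\varphi$ makes this map measurable, so $\varphi(Q+[0,tv])=t\,f_v(Q)$. The functional $f_v$ has the following properties on $\calP(v^\perp)$:
\begin{enumerate}
\item it is a valuation, since $Q\mapsto Q+[0,v]$ preserves unions and intersections of compatible pairs;
\item it is simple, since a degenerate $Q$ gives a degenerate prism;
\item it is translation invariant and measurable;
\item it is invariant under the rotations of $v^\perp$, since these are elements of $\SO(n)$ fixing $v$;
\item comparing $\varphi(s(Q+[0,v]))=s^k\varphi(Q+[0,v])$ with $s\,f_v(sQ)$ shows that it is homogeneous of degree $k-1$.
\end{enumerate}
Since $k-1<n-1$, the induction hypothesis gives $f_v=0$. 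For $k=0$ the homogeneity already forces $f_v=0$ directly. An oblique prism $Q+[0,w]$ is equidecomposable by translations, modulo lower dimensional pieces, with the right prism over $Q$ of the same height. Hence $\varphi$ vanishes on all prisms, and by inclusion--exclusion on all finite unions of prisms.

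\textbf{Step 3: from prisms to all polytopes.} By triangulation and the Inclusion-Exclusion Principle (\cite{SchneiderConvexbodiesBrunn2014}*{Theorem~6.2.3}) it suffices to treat full dimensional simplices $\Delta$. I would invoke Hadwiger's classical decomposition of a dilated simplex. For $m\in\mathbb{N}$, $m\Delta$ decomposes, up to lower dimensional sets, into translates of $\Delta$, translates of $-\Delta$, and pieces that are finite unions of prisms. The numbers of translates of $\Delta$ and $-\Delta$ are polynomials $a(m),b(m)$ in $m$; in the plane, for example, $2T$ consists of three translates of $T$ and one of $-T$. Applying $\varphi$ and using Step 2 gives
\begin{align*}
m^k\varphi(\Delta)=a(m)\varphi(\Delta)+b(m)\varphi(-\Delta),\qquad m^k\varphi(-\Delta)=a(m)\varphi(-\Delta)+b(m)\varphi(\Delta).
\end{align*}
Adding and subtracting these identities gives
\begin{align*}
\bigl(m^k-a(m)-b(m)\bigr)\bigl(\varphi(\Delta)+\varphi(-\Delta)\bigr)=0,\qquad \bigl(m^k-a(m)+b(m)\bigr)\bigl(\varphi(\Delta)-\varphi(-\Delta)\bigr)=0.
\end{align*}
The first factor is $m^k-m^n$, because $a(m)+b(m)$ counts volume. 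This is nonzero for $k<n$ and $m\ge 2$, so the even part vanishes: $\varphi(-\Delta)=-\varphi(\Delta)$.

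\textbf{Main obstacle: the odd part.} The second factor can vanish, as in the planar case $k=1$, so a separate argument is needed. When $n$ is even, $-\Id\in\SO(n)$ and $\varphi(-\Delta)=\varphi(\Delta)$, which together with oddness gives $\varphi(\Delta)=0$. For general $n$, I would use the classical fact (as in Klain's proof of Hadwiger's theorem) that a polytope $P$ and its reflection $\rho P$ in a hyperplane are equivalent modulo translations and prisms. Combined with Step 2, this gives $\varphi(\rho P)=\varphi(P)$. Since $-\Id$ is a composition of a reflection and an element of $\SO(n)$, it follows that $\varphi(-\Delta)=\varphi(\Delta)$, and oddness again forces $\varphi(\Delta)=0$. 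Verifying this reflection/prism equivalence carefully is the step I expect to require the most work.
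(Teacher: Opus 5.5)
The gap is in Step 3, and it already shows up for $n=3$. (The paper gives no proof of this proposition; it quotes it from the author's earlier work, so your outline has to stand on its own.)

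\textbf{A test functional.} In $\R^3$ let $u_1=-e_3$ and $u_2=(e_1+e_2)/\sqrt2$. Write $F(P,u)$ for the face of $P$ with outer normal $u$, and consider Hadwiger's functional
\begin{align*}
	\eta(P)=\sum_{\epsilon_1,\epsilon_2\in\{\pm1\}}\epsilon_1\epsilon_2\,\ell\bigl(F(F(P,\epsilon_1u_1),\epsilon_2u_2)\bigr),
\end{align*}
where $\ell$ is length (the iterated faces are points or segments). It has the following properties.
\begin{enumerate}
\item It is a simple, translation invariant valuation, homogeneous of degree $1$.
\item It is even, since $F(F(-P,v_1),v_2)=-F(F(P,-v_1),-v_2)$.
\item It vanishes on every prism $Q+[0,w]$. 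Faces of Minkowski sums are sums of faces. Unless $u_1,u_2\perp w$, the four iterated faces of the prism are therefore translates of those of $Q$, so $\eta(Q+[0,w])=\eta(Q)=0$ by simplicity. If $u_1,u_2\perp w$, all four are translates of $[0,w]$ and the signs cancel.
\item For $\Delta=\mathrm{conv}(0,e_1,e_2,e_3)$ one computes $\eta(\Delta)=\eta(-\Delta)=\sqrt2$, and $\eta(\rho\Delta)=-\sqrt2$ for the reflection $\rho$ in $e_3^\perp$.
\end{enumerate}

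\textbf{Consequences for Step 3.} Any decomposition of $m\Delta$ of the kind you describe must satisfy $a(m)+b(m)=m$, not $m^n$. The leftover pieces carry volume, and they need not be prisms at all. Already $2\Delta$ consists of four translates of $\Delta$ and an octahedron $O$, with no copy of $-\Delta$. Moreover $\eta(O)=-2\sqrt2$, so $O$ is not a union of prisms even modulo translations. For $k=1$ your first factor is then $m-m=0$, and the even part is not controlled. Likewise, $\eta(\rho\Delta)\neq\eta(\Delta)$ shows that $\Delta$ and $\rho\Delta$ are not equivalent modulo translations and prisms, so the reflection fact you plan to use is false.

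\textbf{The real obstacle.} You have put the difficulty in the wrong place. Evenness is cheap. For odd $n$, cone $\Delta$ from its incenter $c$, and cone each facet $F_i$ from its touching point $p_i$ with the insphere. The pieces $\mathrm{conv}(c,p_i,F_i\cap F_j)$ and $\mathrm{conv}(c,p_j,F_i\cap F_j)$ are mirror images in the hyperplane through $c$ and the ridge $F_i\cap F_j$. Minus a linear reflection lies in $\SO(n)$ for odd $n$, so simplicity and rigid motion invariance give $\varphi(-\Delta)=\varphi(\Delta)$ with no prisms involved.

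The hard part is the even part, and $\eta$ shows that the information your Step 3 uses cannot settle it. $\eta$ is simple, translation invariant, $1$-homogeneous, even, and vanishes on all prisms, yet $\eta(\Delta)\neq0$. So no argument using only these properties can finish the case $n=3$, $k=1$; in particular, no rearrangement of dilated simplices by translations modulo prisms can. You need a further, genuinely new use of $\SO(n)$-invariance together with measurability, one that excludes Hadwiger-type functionals built from lower dimensional face data. One possible route is a description of $k$-homogeneous translation invariant valuations through their $k$-faces and normal cones.

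\textbf{Two smaller repairs in Step 2.}
\begin{enumerate}
\item For $n\ge3$ an oblique prism $Q+[0,w]$ is in general not translation equidecomposable with the right prism over $Q$ of the same height, because the lateral facet normals differ. It is, however, equidecomposable with the right prism $\pi(Q)+[0,w]$, where $\pi$ is the orthogonal projection onto $w^\perp$: cut by the hyperplanes orthogonal to $w$ through the points $jw$, $j\in\mathbb{Z}$, and translate the slabs by $-jw$. This is all you need.
\item The claim that $k=0$ is immediate needs an argument. Running the induction over all integers $k\le n-1$ supplies one.
\end{enumerate}
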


		\begin{theorem}\label{theorem:SimpleLocallyW_n}
			If $\mu\in \CV(\H^n)$ is $\SO^+(1,n)$-invariant and simple, then $\mu$ belongs to $W^{\loc}_n$.
		\end{theorem}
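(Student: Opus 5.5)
We argue by induction on $k$, showing that $\mu\in W^{\loc}_k$ for $k=0,1,\dots,n$. The case $k=0$ is vacuous. Since $\mu$ is simple, it vanishes on singletons, so $\mu\in W^{\loc}_1$ by \autoref{remark:W1Loc}. Now assume $\mu\in W^{\loc}_k$ for some $1\le k\le n-1$; we must show that $\Lambda^\mu_k(\psi)=0$ for every $\psi\in C^\infty_c(U_0)$.

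The first step treats $\Ad(K)$-invariant $\psi$. Choose $\Omega$ and $A_\Omega$ as in \autoref{corollary:EquivalenceLocalW_k}. For $\psi\in C^\infty_{A_\Omega}(U_0)$, the valuation $G_{e_0*}\mu_\psi$ lies in $W_k(\Omega)$. By \autoref{prop:PropertiesFiltration}, $\Lambda^\mu_k(\psi)=D^k_0(G_{e_0*}\mu_\psi)$ is then a translation invariant, measurable, $k$-homogeneous valuation. It is also simple, because $\mu_\psi$ vanishes on lower dimensional polytopes by \autoref{lemma:propertiesLocalSmoothing}(3), and $G_{e_0}$ preserves lower dimensionality by \autoref{lemma:Gnomonic}. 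If $\psi$ is $\Ad(K)$-invariant, then $\Lambda^\mu_k(\psi)$ is $\SO(n)$-invariant by \autoref{lemma:equivPropertiesLambda}(3). \autoref{proposition:FlatCasesimpleValuationsVanish} then gives $\Lambda^\mu_k(\psi)=0$.

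There is a small technicality: we need this for $\psi$ with arbitrary compact support in $U_0$, not only for $\psi$ supported in $A_\Omega$. I expect this is harmless. The definition of $\Lambda^\mu_k(\psi)$ only uses small polytopes near $e_0$ after rescaling by $s$, so measurability, simplicity and the valuation property of $D^k_0$ should hold for any compactly supported $\psi$. Alternatively, the mollifiers $\chi_j$ in \autoref{lemma:approximationMollifier} have shrinking supports, so we only need this step for $\chi_j$ with $j$ large, which lie in $C^\infty_{A_\Omega}(U_0)$.

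The second step removes the invariance assumption, and it is the main obstacle. Fix a compact $A$ and $\psi\in C^\infty_A(U_0)$. Apply \autoref{lemma:approximationMollifier} to write
\begin{align*}
\psi=\lim_{j\rightarrow\infty}\int_H\tilde\psi(g)\,\pi(g)\chi_j\,d\lambda_H(g)
\end{align*}
in $C^\infty_B(U_0)$. By \autoref{lemma:LambdaContinuousFunctional}, $\Lambda^\mu_k(\cdot)[P]$ is linear and continuous on $C^\infty_B(U_0)$. It therefore commutes with the limit and with the Riemann integral, giving
\begin{align*}
\Lambda^\mu_k(\psi)[P]=\lim_{j\rightarrow\infty}\int_H\tilde\psi(g)\,\Lambda^\mu_k(\pi(g)\chi_j)[P]\,d\lambda_H(g).
\end{align*}
It remains to show $\Lambda^\mu_k(\pi(g)\chi_j)=0$ for each $g=(\sigma,h)\in H$. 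Here $\pi(\sigma,h)\chi_j=R_{h}(\chi_j)^{\sigma^{-1}}$, i.e. the function $g'\mapsto\chi_j(\sigma^{-1}g'h)$.

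Right translation by $h$ is removed by \autoref{lemma:equivPropertiesLambda}(1), using the $\SO^+(1,n)$-invariance of $\mu$. Write $\sigma^{-1}=n_xa_L$ with $n_x\in\mathcal{N}$ and $a_L\in\mathcal{A}$. The $\mathcal{N}$-part drops out by \autoref{proposition:InvarianceDerivativesFractionalLinearPart}, which applies because $\mu\in W^{\loc}_k$. The $\mathcal{A}$-part turns into composition with $L$ by \autoref{lemma:equivPropertiesLambda}(2). Together these give
\begin{align*}
\Lambda^\mu_k(\pi(g)\chi_j)[P]=\Lambda^\mu_k(\chi_j)[L'P]
\end{align*}
for a suitable $L'\in\GL(e_0^\perp)$, and the right-hand side vanishes by the first step.

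Care is needed with domains. The shifted functions must remain in $C^\infty_c(U_0)$, which holds because $\pi$ is an action on the homogeneous space $U_0$. We also need the order of the composition, $(\psi^{a})^{n}$ versus $(\psi^{n})^{a}$, to match the statements of the lemmas. If it does not, conjugating $n_x$ by $a_L$ swaps the factors, since $\mathcal{N}$ is normal in $\mathcal{S}$. This yields $\Lambda^\mu_k(\psi)=0$ for all $\psi$, so $\mu\in W^{\loc}_{k+1}$, which closes the induction and gives $\mu\in W^{\loc}_n$.
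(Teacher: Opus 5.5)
Your proposal is correct and follows the paper's proof essentially step for step. The steps are: induction starting from simplicity; vanishing of $\Lambda^\mu_k$ on $\Ad(K)$-invariant functions via the flat result for simple, $\SO(n)$-invariant, $k$-homogeneous valuations; reduction of $\Lambda^\mu_k(\pi(\sigma,h)\chi)[P]$ to $\Lambda^\mu_k(\chi)[LP]$ using the $\SO^+(1,n)$-, $\mathcal{A}$- and $\mathcal{N}$-equivariance; and finally the approximation lemma combined with continuity of $\psi\mapsto\Lambda^\mu_k(\psi)[P]$. The domain technicality you flag is indeed harmless, and the paper passes over it silently: for arbitrary $\psi$, choosing $\Omega$ inside the neighborhood $V_{\supp\psi}$ from the translation lemma gives $G_{e_0*}\mu_\psi\in W_k(\Omega)$, so $D^k_0(G_{e_0*}\mu_\psi)$ has all the required properties.
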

		\begin{proof}
			We show by induction on $0\le k\le n-1$ that $\mu$ belongs to $W^{\loc}_{k+1}$. For $k=0$, this follows from the fact that $\mu_\psi$ is simple for every $\psi\in C^\infty_c(U_0)$ by \autoref{lemma:propertiesLocalSmoothing} (compare also \autoref{remark:W1Loc}). Thus, assume that $\mu$ belongs to $W^{\loc}_{k}$, where $1\le k\le n-1$. We have to show that $\Lambda^\mu_k(\psi)=0$ for all $\psi\in C^\infty_c(U_0)$, which we do in several steps. Note that $\Lambda^\mu_k(\psi)$ is a $k$-homogeneous, translation invariant, simple, and measurable valuation by \autoref{prop:PropertiesFiltration}.
			\begin{enumerate}
				\item If $\psi$ is $\Ad(K)$-invariant, then $\Lambda^\mu_k(\psi)$ is in addition $\SO(e_0^\perp)$-invariant, compare \autoref{lemma:equivPropertiesLambda}. Since it is simple and homogeneous of degree $k\ne n$, it thus vanishes by \autoref{proposition:FlatCasesimpleValuationsVanish}.
				\item If $(\sigma,h)\in H=\mathcal{S}\times \SO^+(1,n)$, we may write $\sigma=n_xa_L$ for $L\in \GL(e_0^\perp)$ and $x\in e_0^\perp$, compare Eq.~\eqref{eq:DefFactorsStabilizer}. \autoref{lemma:equivPropertiesLambda} and \autoref{proposition:InvarianceDerivativesFractionalLinearPart} show that
				\begin{align*}
					&\Lambda^\mu_k(\pi(\sigma,h)\psi)[P]=\Lambda^\mu_k(R_h\psi^{\sigma^{-1}})[P]=\Lambda^\mu_k(\psi^{a_L^{-1}n_{-x}})[P]\\
					&\quad=\Lambda^\mu_k\left({\left(\psi^{a_L^{-1}}\right)}^{n_{-x}}\right)[P]=\Lambda^\mu_k(\psi)[L^{-1}P]
				\end{align*}
				for all $P\in\calP(e_0^\perp)$ and $\psi\in C^\infty_c(U_0)$. In particular, $\Lambda^\mu_k(\pi(\sigma,h)\chi)=0$ for every $\Ad(K)$-invariant $\chi\in C^\infty_c(U_0)$ by (1).
				\item If $\psi\in C^\infty_c(U_0)$ is arbitrary, we may use \autoref{lemma:approximationMollifier} to find a sequence $\chi_j\in C^\infty_c(U_0)$ of $\Ad(K)$-invariant functions and $\tilde{\psi}\in C^\infty_c(H)$ such that
				\begin{align*}
					\psi=\lim_{j\rightarrow\infty}\int_H \tilde{\psi}(g)[\pi(g)\chi_j]d\lambda_H(g)
				\end{align*}
				where the sequence and Riemann integral converge in $C^\infty_B(U_0)$ for a compact neighborhood $B$ of $\supp\psi$. Since $\Lambda^\mu_k(\cdot)[P]$ is continuous on $C^\infty_B(U_0)$ by \autoref{lemma:LambdaContinuousFunctional}, we obtain
				\begin{align*}
					\Lambda^\mu_k(\psi)[P]=\lim_{j\rightarrow\infty}\int_H \tilde{\psi}(g)\Lambda^\mu_k\left(\pi(g)\chi_j\right)[P]d\lambda_H(g).
				\end{align*}
				Since $\chi_j$ is $\Ad(K)$-invariant, the integrand vanishes identically by (2). 
			\end{enumerate}
			Thus $\Lambda^\mu_k$ vanishes identically, which shows that $\mu$ belongs to $W_{k+1}^{\loc}$.
		\end{proof}
	
	\subsection{Characterization of $\Lambda^\mu_n$}
		Recall that the stabilizer of $e_0$ is given by the semi-direct product $\mathcal{S}\cong\mathcal{N}\rtimes \mathcal{A}$, compare Eq.~\eqref{eq:DefFactorsStabilizer}. Consider the smooth character $\chi_0:\mathcal{S}\rightarrow\R\setminus \{0\}$, $\chi_0(\sigma)=\det L$ for $\sigma=n_x a_L$, $x\in e_0^\perp$, $L\in \GL(e_0^\perp)$. Note that $\chi_0$ is in particular invariant under the stabilizer $K$ of $e_0$. This character encodes precisely the action of the differential of elements of the subgroup $\mathcal{A}$ on the space of Lebesgue measures on the tangent space $e_0^\perp$ to $e_0\in\H^n$. The next result uses this observation to describe the action of $\mathcal{S}$ on the derivative $\Lambda^\mu_n$.
		\begin{corollary}\label{corollary:distribution}
			Let $\mu\in\CV(\H^n)$ belong to $W^{\loc}_n$. Then there exists a distribution $c_\mu:C^\infty_c(U_0)\rightarrow\R$ such that
			\begin{align*}
				\Lambda^\mu_n(\psi)=c_\mu(\psi)\vol_n,
			\end{align*}
			where $\vol_n$ denotes the Lebesgue measure on $e_0^\perp$. Moreover, if $\mu$ is $\SO^+(1,n)$-invariant, then
			\begin{align*}
				c_{\mu}(\pi(\sigma,h)\psi)=\chi_0(\sigma)^{-1}c_\mu(\psi)
			\end{align*}
			for $(\sigma,h)\in H=\mathcal{S}\times \SO^+(1,n)$.
		\end{corollary}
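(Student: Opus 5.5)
The plan has two parts: first show that $\Lambda^\mu_n(\psi)$ is a multiple of Lebesgue measure, then read off the transformation law of the coefficient from \autoref{lemma:equivPropertiesLambda} and \autoref{proposition:InvarianceDerivativesFractionalLinearPart}.

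\textbf{Existence of $c_\mu$.} Fix $\psi\in C^\infty_c(U_0)$. Choose a relatively compact open convex neighborhood $\Omega$ of $0$ in $\Omega(e_0)$ small enough that $\supp\psi\subset A_\Omega$, where $A_\Omega$ is the set from \autoref{corollary:EquivalenceLocalW_k}. If this is not possible directly, first shrink $\Omega$; the derivative $\Lambda^\mu_n(\psi)=D^n_0$ only depends on polytopes near $0$. By \autoref{corollary:EquivalenceLocalW_k}, $G_{e_0*}\mu_\psi\in W_n(\Omega)$. Part (4) of \autoref{prop:PropertiesFiltration} then says that $G_{e_0*}\mu_\psi$ is the smooth measure $\frac1{n!}\int_P c(x)\,dx$ with $c(x)=D^n_x(G_{e_0*}\mu_\psi)[P_0]$.

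Because the measure is absolutely continuous with a smooth density, its $n$-th derivative at $0$ is
\begin{align*}
D^n_0(G_{e_0*}\mu_\psi)[P]=\frac{d^n}{dt^n}\Big|_0\frac1{n!}\int_{tP}c\,dx=c(0)\vol_n(P).
\end{align*}
So we may set $c_\mu(\psi):=\Lambda^\mu_n(\psi)[P_0]$ for a fixed $P_0$ with $\vol_n(P_0)=1$.

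Linearity of $c_\mu$ is clear. Continuity on each $C^\infty_A(U_0)$ follows from \autoref{lemma:LambdaContinuousFunctional} with $k=n$, since $\mu\in W^{\loc}_n$. Hence $c_\mu$ is a distribution.

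An alternative route avoids the domain bookkeeping: by part (1) of \autoref{prop:PropertiesFiltration}, $\Lambda^\mu_n(\psi)$ is a translation invariant, measurable, $n$-homogeneous valuation. Such valuations are multiples of volume, by the measurable Hadwiger-type results of \cite{KnoerrRigidmotioninvariant2026}. I would prefer the first route because it uses only results already stated here.

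\textbf{Equivariance.} Write $\sigma=n_xa_L$. The computation in step (2) of the proof of \autoref{theorem:SimpleLocallyW_n} applies verbatim with $k=n$. It uses \autoref{lemma:equivPropertiesLambda}(1),(2), the $\SO^+(1,n)$-invariance of $\mu$, and \autoref{proposition:InvarianceDerivativesFractionalLinearPart} (valid since $\mu\in W^{\loc}_n$), and gives
\begin{align*}
\Lambda^\mu_n(\pi(\sigma,h)\psi)[P]=\Lambda^\mu_n(\psi)[L^{-1}P]=c_\mu(\psi)\vol_n(L^{-1}P)=|\det L|^{-1}c_\mu(\psi)\vol_n(P).
\end{align*}
Evaluating at $P_0$ gives the claimed relation with $\chi_0(\sigma)$, up to the sign of $\det L$.

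\textbf{Main obstacle.} The only delicate point is this sign. Since $\mathcal{S}$ sits in $\PGL(n+1,\R)$, $\det L$ is defined only up to the positive scalars we allowed. The formula as stated requires interpreting $\chi_0$ through $|\det L|$, or restricting to $\det L>0$ (for example, to the identity component). I would state the relation with $|\det L|$ and note that this agrees with $\chi_0$ on the relevant component.

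A secondary check is that the decomposition $\sigma^{-1}=a_L^{-1}n_{-x}$ and the order of the actions $\psi\mapsto\psi^{b}$ produce $L^{-1}$ rather than $L^{-T}$. This is a routine verification using $a_Ln_xa_L^{-1}=n_{L^{-T}x}$ and \autoref{proposition:InvarianceDerivativesFractionalLinearPart}, which kills the $\mathcal{N}$-part regardless of its parameter.
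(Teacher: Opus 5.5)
Your proof is correct. The equivariance half is exactly the paper's computation. Since $\sigma^{-1}=a_L^{-1}n_{-x}$ gives $\psi^{\sigma^{-1}}=(\psi^{a_L^{-1}})^{n_{-x}}$, \autoref{proposition:InvarianceDerivativesFractionalLinearPart} and \autoref{lemma:equivPropertiesLambda} produce $L^{-1}$ directly, and the relation $a_Ln_xa_L^{-1}=n_{L^{-T}x}$ is never needed.

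For the existence of $c_\mu$, the paper takes what you call the alternative route. It notes that $\Lambda^\mu_n(\psi)$ is translation invariant and $n$-homogeneous by \autoref{prop:PropertiesFiltration}(1), hence a multiple of $\vol_n$ by the classical result \cite{SchneiderConvexbodiesBrunn2014}*{Theorem~6.4.3}. That result needs no measurability, so \cite{KnoerrRigidmotioninvariant2026} is unnecessary. Your primary route through \autoref{prop:PropertiesFiltration}(4) is valid but roundabout. Part (4) already presupposes that $D^n_x$ is a multiple of volume, since its density is defined via an arbitrary unit-volume $P_0$; you then integrate and differentiate again to recover that fact.

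Neither route avoids the domain bookkeeping, because (1) also needs $G_{e_0*}\mu_\psi\in W_n(\Omega)$. Moreover, \autoref{corollary:EquivalenceLocalW_k} as stated only provides \emph{some} $A_\Omega$, so ``shrinking $\Omega$ until $\supp\psi\subset A_\Omega$'' is not justified by its statement. Argue directly instead. Choose $\Omega\subset V_{\supp\psi}$ with $G_{e_0}^{-1}(\calP(\Omega))\subset\calP_{\supp\psi}(\H^n)$. Then \autoref{lemma:TranslationMaps} gives $D^i_x(G_{e_0*}\mu_\psi)=\Lambda^\mu_i(\psi^{\tau_x})=0$ for $i<n$ and $x\in\Omega$.

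Your sign observation is correct and catches a slip in the paper. Its final step $\vol_n(L^{-1}P_0)=\det(L)^{-1}$ holds only for $\det L>0$. The derivation gives $c_\mu(\pi(\sigma,h)\psi)=|\det L|^{-1}c_\mu(\psi)$ unconditionally. Hence the formula with $\chi_0=\det L$ fails for every orientation-reversing $L$ whenever $c_\mu(\psi)\neq 0$, for instance for $\mu=V^h_n$, where $c_\mu\not\equiv 0$.

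Your diagnosis of the cause is wrong, though. $\det L$ is not ambiguous: the representative $g_{x,A}$ with $(0,0)$-entry $1$ is unique, and $\mathcal{S}$ genuinely contains $a_L$ with $\det L<0$. The absolute value comes solely from how Lebesgue measure transforms, as your own display already shows. Replacing $\chi_0$ by $|\chi_0|$ costs nothing later, since it is still a character trivial on $K$. So the construction of $w$ and \autoref{theorem:InvariantWnlocCharacterization} go through unchanged.
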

		\begin{proof}
			Note that $\Lambda^\mu_n(\psi)$ is a translation invariant and $n$-homogeneous valuation on $\calP(e_0^\perp)$ by \autoref{prop:PropertiesFiltration}. By Hadwiger's characterization of $n$-homogeneous translation invariant valuations on polytopes (see \cite{SchneiderConvexbodiesBrunn2014}*{Theorem 6.4.3}), it is thus a multiple of the Lebesgue measure, so $\Lambda^\mu_n(\psi)=c_\mu(\psi)\vol_n$ for a constant $c_\mu(\psi)\in\R$. If we fix a polytope $P_0$ with $\vol_n(P_0)=1$, then $c_\mu(\psi)=\Lambda^\mu_n(\psi)[P_0]$, so \autoref{lemma:LambdaContinuousFunctional} shows that $c_\mu$ defines a continuous linear functional on $C^\infty_c(U_0)$ and is thus a distribution.\\
			Now assume that $\mu$ is $\SO^+(1,n)$-invariant. For $(\sigma,h)\in H$, write $\sigma=n_xa_L$ for $x\in e_0^\perp$, $L\in\GL(e_0^\perp)$, compare Eq.~\eqref{eq:DefFactorsStabilizer}. From \autoref{lemma:equivPropertiesLambda} and \autoref{proposition:InvarianceDerivativesFractionalLinearPart} we obtain
			\begin{align*}
				&c_\mu(\pi(\sigma,h)\psi)=c_\mu(R_h\psi^{\sigma^{-1}})=c_\mu(R_h\psi^{a_L^{-1}n_{-x}})=\Lambda^\mu_n\left(R_h{\left(\psi^{a_L^{-1}}\right)}^{n_{-x}}\right)[P_0]\\
				&\quad=\Lambda^\mu_n(\psi^{a_L^{-1}})[P_0]=\Lambda^\mu_n(\psi)[L^{-1}P_0]=c_\mu(\psi)\vol_n(L^{-1}P_0)=\det(L)^{-1}c_\mu(\psi),
			\end{align*}
			which shows the last claim due to the definition of $\chi_0$.
		\end{proof}
		\begin{remark}
			Due to \autoref{remark:orderDistribution}, this distribution is always of order at most $n$. Note also that the equivariance property holds for elements belonging to $W^{\loc}_n$ without any invariance assumption if we restrict the action on $c$ to the stabilizer $\mathcal{S}\subset H$ of $e_0$.
		\end{remark}
		We use $\chi_0$ to define a smooth function $w:U_0\rightarrow\R\setminus\{0\}$ in the following way: Recall that $U_0\cong H/K_{\Delta}$ is a homogeneous space of $H=\mathcal{S}\times \SO^+(1,n)$ by \autoref{lemma:partialActionHomSpace}, where the diffeomorphism is induced by the map $\tilde{\pi}:H\rightarrow U_0$, $(\sigma,h)\mapsto \sigma\cdot h^{-1}$. We may define a smooth map $\tilde{w}:H\rightarrow\R$ by $\tilde{w}(\sigma,h):=\chi_0(\sigma)^{-1}$. Then it is easy to check that $\tilde{w}$ is constant on the fibers of $\tilde{\pi}$, which is a smooth submersion, so $\tilde{w}$ descends to the desired smooth function $w$ on $U_0$. In other words, we have $w(\sigma h^{-1})=\chi_0(\sigma)^{-1}$ for $(\sigma,h)\in H$, which is well defined because $\chi_0$ is $K$-invariant by definition.
		\begin{theorem}\label{theorem:InvariantWnlocCharacterization}
			If $\mu\in \CV(\H^n)$ is $\SO^+(1,n)$-invariant and belongs to $W_n^{\loc}$, then there exists $C\in\R$ such that
			\begin{align*}
				c_\mu(\psi)=C\int_{U_0}\psi(g)w(g)d\lambda_{\PGL(n+1,\R)}(g)
			\end{align*}
			for all $\psi\in C^\infty_c(U_0)$.
		\end{theorem}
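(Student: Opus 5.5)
The plan is to reduce the statement to the uniqueness of relatively invariant distributions on a homogeneous space. By \autoref{corollary:distribution}, $c_\mu$ is a distribution on $U_0\cong H/K_\Delta$ satisfying $c_\mu(\pi(\sigma,h)\psi)=\chi_0(\sigma)^{-1}c_\mu(\psi)$ for all $(\sigma,h)\in H$. First we remove the character. Define the distribution $\tilde c(\psi):=c_\mu(\psi/w)$; this is well defined because $w$ is smooth and nowhere vanishing on $U_0$, so $\psi\mapsto\psi/w$ is a continuous automorphism of $C^\infty_c(U_0)$. Next we compute how $w$ transforms. We have $[\pi(\sigma,h)w](g)=w(\sigma^{-1}gh)$. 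Writing $g=\sigma' h'^{-1}$, this becomes $w(\sigma^{-1}\sigma'(h^{-1}h')^{-1})=\chi_0(\sigma^{-1}\sigma')^{-1}=\chi_0(\sigma)w(g)$. Hence $\pi(\sigma,h)(\psi/w)=\chi_0(\sigma)^{-1}\,\pi(\sigma,h)\psi/w$. Combined with the equivariance of $c_\mu$, this shows that $\tilde c$ is $H$-invariant:
\begin{align*}
\tilde c(\pi(\sigma,h)\psi)=\chi_0(\sigma)\,c_\mu\bigl(\pi(\sigma,h)(\psi/w)\bigr)=\tilde c(\psi).
\end{align*}

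The second step is to show that the Haar measure restricted to $U_0$ is $H$-invariant. By \autoref{lemma:partialActionHomSpace}(4), $\lambda_{\PGL(n+1,\R)}|_{U_0}$ is left $\mathcal S$-invariant and right $\SO^+(1,n)$-invariant. It is therefore a nonzero $H$-invariant positive Radon measure on $U_0$, and in fact a smooth positive density.

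The third step, which I expect to be the only substantive one, is the standard fact that an $H$-invariant distribution on a homogeneous space $H/K_\Delta$ is a multiple of an invariant smooth measure whenever the latter exists. I would prove it by regularization. Take an approximate identity $\eta_j\in C^\infty_c(H)$ and consider $\tilde c_j(\psi):=\int_H\eta_j(g)\,\tilde c(\pi(g)\psi)\,d\lambda_H(g)$. By invariance $\tilde c_j=\tilde c$. On the other hand, $\tilde c_j(\psi)=\tilde c(\pi(\eta_j)\psi)$, and $\pi(\eta_j)$ maps distributions to smooth functions because the action is transitive: the orbit map $H\to U_0$ is a submersion. So $\tilde c$ is given by a smooth density $f\,d\lambda$ with respect to the invariant measure. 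Invariance of both $\tilde c$ and $\lambda$ then forces $f(\sigma^{-1}gh)=f(g)$ for all $(\sigma,h)$, and transitivity gives that $f\equiv C$. As a cleaner alternative, one can pull back to $H$ via the submersion $\tilde\pi$ and the fiber integration $\tilde\pi_*$ of \autoref{lemma:approximationMollifier}. The distribution $\tilde c\circ\tilde\pi_*$ on $H$ is left-invariant, so its derivatives along all right-invariant vector fields vanish. A distribution on a connected component with vanishing derivatives in all directions is constant, and one handles the components using the transitive group action. Carrying out this step rigorously, in particular the smoothing of distributions by the group action, is the main technical point, but it is standard.

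Finally, we unwind the substitution. Given $\psi\in C^\infty_c(U_0)$, apply $\tilde c=C\lambda$ to $\psi w$:
\begin{align*}
c_\mu(\psi)=\tilde c(\psi w)=C\int_{U_0}\psi(g)w(g)\,d\lambda_{\PGL(n+1,\R)}(g),
\end{align*}
which is the claimed formula.
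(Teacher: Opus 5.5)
Your proposal is correct and is essentially the paper's proof: you remove the character by passing to $\psi\mapsto c_\mu(\psi/w)$, verify $H$-invariance from $\pi(\sigma,h)w=\chi_0(\sigma)w$ (your placement of the factors $\chi_0(\sigma)^{\pm1}$ is in fact the consistent one), and identify the resulting invariant distribution on $U_0\cong H/K_\Delta$ with a multiple of the restricted Haar measure. The only addition is that you sketch why an $H$-invariant distribution on a homogeneous space is a multiple of the invariant measure, by regularizing with an approximate identity on $H$; the paper simply invokes this as a standard fact.
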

		\begin{proof}
			Consider the distribution $S$ on $C^\infty_c(U_0)$ defined by $S(\psi)=c_\mu(w^{-1}\psi)$. Then $S$ is $H$-invariant since
			\begin{align*}
				S(\pi(\sigma,h)\psi)&=c_\mu(w^{-1}\pi(\sigma,h)\psi)=\chi_0(\sigma)^{-1}c_\mu(\pi(\sigma,h)[w^{-1}\psi])\\
				&=\chi_0(\sigma)^{-1}\chi_0(\sigma)c_\mu(w^{-1}\psi)=S(\psi)
			\end{align*}
			by \autoref{corollary:distribution}. Thus $S$ is an $H$-invariant distribution on $U_0\cong H/K_{\Delta}$. Since this is a homogeneous space of $H$, $S$ is thus a multiple of the unique $H$-invariant measure on $U_0$. In particular, since the restriction of the Haar measure of $\PGL(n+1,\R)$ to $U_0$ is also $H$-invariant, compare \autoref{lemma:partialActionHomSpace}, $S$ is a multiple of $\lambda_{\PGL(n+1,\R)}|_{U_0}$. Unraveling the definitions, we obtain the desired representation of $c_{\mu}$.
		\end{proof}
\section{Proofs of the main results}
	
	\begin{proof}[Proof of \autoref{maintheorem:SimpleValuation}]
		Let $\mu\in \CV(\H^n)$ be simple and $\SO^+(1,n)$-invariant. Then $\mu$ belongs to $W^{\loc}_n$ by \autoref{theorem:SimpleLocallyW_n}. Let $(\psi_j)_j$ be a smooth approximation of the identity in $C^\infty_c(U_0)$. For $x\in \Omega(e_0)$, we have for all $\psi\in C^\infty_c(U_0)$ such that $\psi^{\tau_x}\in C^\infty_c(U_0)$,
		\begin{align*}
			D^n_x\mu_\psi=\Lambda^\mu_n(\psi^{\tau_x})=c_\mu(\psi^{\tau_x})\vol_n
		\end{align*}
		by \autoref{lemma:TranslationMaps} and \autoref{corollary:distribution}. Fix $P\in\calP(\Omega(e_0))$ and a relatively compact convex open neighborhood $\Omega\subset\Omega(e_0)$ of $P$. Applying \autoref{corollary:EquivalenceLocalW_k}, we thus obtain a compact neighborhood $A$ of the identity in $U_0$ such that $G_{e_0*}\mu_{\psi}\in W_n(\Omega)$ for all $\psi\in C^\infty_A(U_0)$. Shrinking $A$ if necessary, we may assume that $\psi^{\tau_x}\in C^\infty_c(U_0)$ for all $\psi\in C^\infty_A(U_0)$ and $x\in \Omega$. Since the supports of $\psi_j$ shrink to the identity, we find $M>0$ such that $\supp \psi_j\subset A$ for all $j\ge M$. Now \autoref{prop:PropertiesFiltration} shows that for all $j\ge M$,
		\begin{align*}
			\mu_{\psi_j}(G_{e_0}^{-1}(P))=G_{e_0*}\mu_{\psi_j}(P)=\int_{P}c_\mu(\psi_j^{\tau_x}) dx.
		\end{align*}
		By \autoref{theorem:InvariantWnlocCharacterization}, there is a constant $C\in\R$ independent of $P$ and $\psi_j$ such that
		\begin{align*}
			c_\mu(\psi_j^{\tau_x})=&C\int_{U_0}\psi^{\tau_x}_j(g)w(g)d\lambda_{\PGL(n+1,\R)}(g)\\
			=&C\int_{U_0}\psi_j(g)w(\tau_{-x}g)d\lambda_{\PGL(n+1,\R)}(g),
		\end{align*}
		where we used the invariance of the Haar measure (and that $\tau_{-x}g\in U_0$ for all $g\in \supp\psi_j$ by construction). Since $(\psi_j)_j$ is a smooth approximation of the identity, $c_\mu(\psi_j^{\tau_x})$ converges uniformly in $x\in P$ to $Cw(\tau_{-x})$. In particular, since $\mu_{\psi_j}(G_{e_0}^{-1}(P))$ converges to $\mu(G^{-1}_{e_0}(P))$ by \autoref{lemma:propertiesLocalSmoothing}, we obtain
		\begin{align*}
			\mu(G^{-1}_{e_0}(P))=C\int_{P}w(\tau_{-x})dx.
		\end{align*}
		Since this holds for all $P\in\calP(\Omega(e_0))$, $G_{e_0*}\mu$ and (therefore) $\mu$ are smooth measures on $\calP(\Omega(e_0))$ and $\calP(\H^n)$ respectively. As $\mu$ is $\SO^+(1,n)$-invariant by assumption, it has to be a multiple of the hyperbolic volume.
	\end{proof}
	
	\autoref{maintheorem:hyperbolicHadwiger} now follows with a standard argument from \autoref{maintheorem:SimpleValuation}, which we include for completeness.	
	\begin{proof}[Proof of \autoref{maintheorem:hyperbolicHadwiger}]
		We use induction on the dimension $n$ of $\H^n$, where the case $n=0$ is vacuously true since $\calP(\H^0)$ only contains a singleton. Assume that the claim holds for $\H^{n-1}$. If $\mu\in\CV(\H^n)$ is $\SO^+(1,n)$-invariant, consider its restriction to $\H^n\cap \{u\in \R^{n+1}:u_n=0\}\cong \H^{n-1}$. Then $\mu|_{\H^{n-1}}\in\CV(\H^{n-1})$ is $\SO^+(n-1,1)$-invariant and thus a linear combination of the hyperbolic intrinsic volumes by the induction hypothesis. In particular, we find $c_0,\dots,c_{n-1}\in\R$ such that $\tilde{\mu}:=\mu-\sum_{j=0}^{n-1}c_jV^h_j$ vanishes on polytopes in $\H^{n-1}$. Since $\tilde{\mu}$ is $\SO^+(1,n)$-invariant by construction, it thus vanishes on all lower dimensional polytopes in $\calP(\H^n)$, so $\tilde{\mu}$ is a simple valuation. By \autoref{maintheorem:SimpleValuation}, $\tilde{\mu}=c_nV^h_n$ is a multiple of the hyperbolic volume, and we obtain
		\begin{align*}
			\mu=\tilde{\mu}+\sum_{j=0}^{n-1}c_jV^h_j=\sum_{j=0}^{n}c_jV^h_j,
		\end{align*}
		which completes the proof.
	\end{proof}

\bibliographystyle{plain}
\bibliography{../../library/library.bib}

\Addresses
	
\end{document}